\documentclass[preprint,12pt]{elsarticle}

\usepackage{tikz}
\newcommand{\tikznode}[2]{%
\ifmmode%
\tikz[remember picture,baseline=(#1.base),inner sep=0pt] \node (#1) {$#2$};%
\else
\tikz[remember picture,baseline=(#1.base),inner sep=0pt] \node (#1) {#2};%
\fi}
\usepackage{amssymb}
\usepackage{amsmath}
\usepackage{amsthm}
\numberwithin{equation}{section}

\journal{Linear Algebra Appl.}

\newtheorem{theorem}{Theorem}[section]
\newtheorem{lemma}[theorem]{Lemma}

\newtheorem{corollary}[theorem]{Corollary}

 \theoremstyle{definition}
 \newtheorem{definition}[theorem]{Definition}
 \newtheorem{example}[theorem]{Example}
 \newtheorem{remark}[theorem]{Remark}
\begin{document}

\begin{frontmatter}



\title{Boundary value problems for linear differential-algebraic equations: solvability via the Kronecker canonical form}
\nonumnote{%
\includegraphics[height=7.0mm]{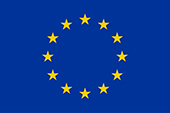}\,
This paper is supported by European Union’s Horizon 2020 research and
innovation programme under the Marie Sklodowska-Curie grant agreement
ID: 873071, project SOMPATY (Spectral Optimization: From Mathematics
to Physics and Advanced Technology).}

\author{Anar Assanova\fnref{label1}}
\author{Carsten Trunk\fnref{label2}}
\author{Roza Uteshova\fnref{label1}}
\author{Henrik Winkler\fnref{label2}}
\affiliation[label1]{organization={Institute of Mathematics and Mathematical Modeling},
            addressline={Shevchenko Str. 28}, 
            city={Almaty},
            postcode={050010}, 
            country={Kazakhstan}}
\affiliation[label2]{organization={Institut für Mathematik, TU Ilmenau},
            addressline={Weimarer Straße 25}, 
            city={Ilmenau},
            postcode={98693}, 
            country={Germany}}

\begin{abstract}
The solvability of two-point boundary value problems for constant-coefficient differential-algebraic equations is investigated. Unlike previous studies, which often assume that the matrix pair associated with the equation is regular, we consider the general singular case. Using the Kronecker canonical form, we decompose the problem into simpler subsystems, enabling a systematic analysis of solvability. The method of parameterization reduces the boundary value problem to a system of algebraic equations. We derive criteria for the existence and uniqueness of solutions and provide a comprehensive framework for solving such boundary value problems. Several illustrative examples are presented and show the applicability of the results.
\end{abstract}

\begin{keyword}
differential-algebraic equation \sep boundary value problem \sep Kronecker
 canonical form \sep parameterization method.

\MSC 34A09 \sep 34B05 \sep 15A22
\end{keyword}

\end{frontmatter}

\section{Introduction}

We consider the linear differential-algebraic equation with constant coefficients of the form
\begin{equation}\label{dae}
E\dot{x}(t)=Ax(t)+f(t), \quad t\in (0,T),
\end{equation}
subject to the boundary condition
\begin{equation}\label{BC}
Bx(0)+Cx(T)=d.
\end{equation}
Here $E, A\in \mathbb{R}^{m\times n}$, $B,C\in \mathbb{R}^{l\times n}$, $d\in\mathbb{R}^l$, $T>0$, and $f\in C^1((0,T),\mathbb{R}^m)$.
By a solution of the boundary value problem \eqref{dae}, \eqref{BC} we mean a function $x\in C^1([0,T],\mathbb{R}^n)$ that satisfies equation \eqref{dae} and the boundary condition \eqref{BC}.

Differential-algebraic equations have become widely used as a tool to model and simulate dynamical systems with constraints in various applications \cite{Kunkel:2006,brenan1995numerical,ascher1998computer,lamour2013differential, riaza2008differential}. 

Boundary value problems for differential-algebraic equations appear in many problems from engineering such as electrical networks or multi-body problems.
This theory began to develop through the adaptation of modified shooting and collocation methods, originally designed for boundary value problems in ordinary differential equations \cite{ascher1992projected, bai1992modified, clark1989numerical, stover2001collocation},
for an overview of this development till 2001 we refer to \cite{RabierRheinboldt02}, for a more recent overview see \cite{Lamour_Maerz2015}.

The approach in \cite{Lamour_Maerz2015}, see also 
\cite{lamour2013differential,marz1996canonical, marz2004solvability},
is rather general allowing non-linear
problems or time varying coefficients which can be also
operator valued (this is, $E$ and $A$ are linear operators in some Banach space). That approach is based on the projector-based analysis, which is presented in, e.g., \cite{lamour2013differential}.

Here we restrict to the more special
case of a linear, finite-dimensional (i.e.\ $E$ and $A$ are matrices) and (time) constant case. This setting allows to base our approach on the Kronecker canonical form (for details we refer to the section below). The Kronecker canonical form was utilized to establish a solution theory for linear, time-constant 
matrix valued differential-algebraic equations, see, e.g., \cite{berger_diss,Trenn13}. However, here it is the first  time used to obtain a detailed decription for the solvability and unique solvability of the  boundary value problem \eqref{dae}, \eqref{BC}. It is the main contribution of this paper.  

In \cite{Assanova_Trunk_Uteshova_2024}, see also \cite{berger2016linear}, we studied the boundary value problem \eqref{dae}, \eqref{BC} under the assumption that the matrix pair $(E,A)$ is regular, i.e. $m=n$ and $\text{det}(\lambda E-A)$ is not the zero polynomial. Using the idea of the method of parameterization \cite{Dzhumabaev:1989}, we set a parameter as the value of the solution at the left endpoint of the interval. Applying the Weierstrass canonical form to the matrix pair associated with the differential-algebraic equation, we derived a criterion for the unique solvability of the problem.

In this paper, we address problem \eqref{dae}, \eqref{BC} in the general case, where the matrix pair $(E,A)$ in \eqref{dae} is not necessarily regular. We use the Kronecker canonical form of $(E,A)$ and the method of parameterization to obtain a criterion for the unique solvability of problem \eqref{dae}, \eqref{BC}. When applying the parameterization method to the boundary value problem, the question of solvability reduces to determining the solvability of a system of algebraic equations for the introduced parameters. 

The paper is organized as follows. In Section 2, we provide the necessary preliminaries, including an overview of the Kronecker canonical form for matrix pairs and the concept of generalized inverses, which play a central role in our analysis. Section 3 focuses on the solvability of boundary value problems for the decomposed blocks arising from the Kronecker form. In Section 4, we establish existence and uniqueness theorems for the general boundary value problem \eqref{dae}, \eqref{BC}, utilizing the method of parameterization. Finally, in Section 5, we illustrate the results with a selection of examples that demonstrate the applicability and effectiveness of our approach.

\section{Preliminaries}
    \subsection{The Kronecker canonical form for matrix pairs}
    We will employ a well-known tool for constant-coefficient differential-algebraic equations, namely, the Kronecker canonical form of the matrix pair $(E,A)$ associated with equation \eqref{dae}. The Kronecker canonical form is going back to \cite{kronecker1890algebraische}, see also \cite[Ch. XII]{gantmacher1959theory}. The form in Theorem \ref{Th_KCF} below is taken from \cite{berger2016linear}.

   Let $\alpha$ be a multi-index, $\alpha=(\alpha_1, \ldots, \alpha_{n_{\alpha}})\in \mathbb{N}^{n_{\alpha}}$. The absolute value of $\alpha$ is defined by $|\alpha|=\sum \limits_{i=1}^{{n_{\alpha}}}\alpha_i$.  For $k \in \mathbb{N}$ we
introduce the matrices
\begin{samepage}
\[
N_k=\begin{bmatrix}
    \tikznode{a}{0} & & & \\
    \tikznode{b}{1} & & & \\
    & & & \\
    & & \tikznode{c}{1} & \tikznode{d}{0}
  \end{bmatrix}\in \mathbb{R}^{k\times k},
\quad
N_\alpha=\operatorname{diag}(N_{\alpha_1},\ldots,N_{\alpha_{n_{\alpha}}})
\in \mathbb{R}^{|\alpha|\times |\alpha|}.
\]
\begin{tikzpicture}[remember picture, overlay]
  \draw (a) -- (d);
  \draw (b) -- (c);
\end{tikzpicture}
\end{samepage}

For $k \in \mathbb{N}$, $k>1$, we define
\begin{equation*}
K_k=\begin{bmatrix}
    \tikznode{e}{1} & \tikznode{f}{0} & & \\
    \\
      &  & \tikznode{g}{1} & \tikznode{h}{0}\\
  \end{bmatrix}, ~
  L_k=\begin{bmatrix}
    \tikznode{i}{0} & \tikznode{j}{1} & & \\
    \\
      &  & \tikznode{k}{0} & \tikznode{l}{1}\\
  \end{bmatrix}\in \mathbb{R}^{(k-1)\times k}.
\end{equation*}

\begin{tikzpicture}[remember picture, overlay,shorten >=1pt,shorten <=1pt]
  \draw (e) -- (g);
  \draw (f) -- (h);
  \draw (i) -- (k);
  \draw (j) -- (l);
\end{tikzpicture}

Let $\alpha$ be a multi-index with $\alpha_i>1$, $i=1,\ldots,n_{\alpha}$. We set
\begin{equation*}
 K_\alpha=\text{diag}(K_{\alpha_1},\ldots,K_{\alpha_{n_{\alpha}}}), ~ L_\alpha=\text{diag}(L_{\alpha_1},\ldots,L_{\alpha_{n_{\alpha}}})\in \mathbb{R}^{(|\alpha|-{n_{\alpha}})\times |\alpha|}.
\end{equation*}

Note that the above notion is extended to multiindices $\alpha=(\alpha_1, \ldots, \alpha_{n_{\alpha}})\in \mathbb{N}^{n_{\alpha}}$ with some (or all) entries equal to 1. For a detailed explanation, we refer to \cite{berger2016linear}. 

The following theorem is due to Kronecker \cite{kronecker1890algebraische}, a modern version is in
\cite[Ch.: XII]{gantmacher1959theory}.

\begin{theorem}\label{Th_KCF} For any pair of matrices $E,A\in \mathbb{K}^{m\times n}$, where $\mathbb{K}$ is the field of real or complex numbers, there exist invertible matrices $W \in \mathbb{K}^{m\times m}$ and $V \in \mathbb{K}^{n\times n}$ such that
\begin{equation}\label{KCF_1}
WEV=
\begin{bmatrix} I_{n_0}& 0 & 0 & 0\\
0&  N_\alpha & 0 & 0\\
0& 0 & L_\beta & 0\\
0& 0 & 0 & L_\gamma^\top\\
\end{bmatrix} ~~ \text{and}~~~~ WAV=
\begin{bmatrix} A_0& 0 & 0 & 0\\
0& I_{|\alpha|} & 0 & 0\\
0& 0 & K_{\beta} & 0\\
0& 0 & 0 & K_\gamma^\top\\
\end{bmatrix}
\end{equation}
for some $A_0\in \mathbb{K}^{n_0\times n_0}$ and multi-indices $\alpha\in \mathbb{N}^{n_{\alpha}}$, $\beta\in \mathbb{N}^{n_{\beta}}$, and $\gamma\in \mathbb{N}^{n_{\gamma}}$. The multi-indices $\alpha, \beta, \gamma$ are unique up to a permutation of their respective entries, and the matrix $A_0$ is unique up to similarity.
\end{theorem}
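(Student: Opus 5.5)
The proof is by induction on $m+n$, splitting off one singular block at a time until the pencil $\lambda E-A$ becomes regular. At that point the Weierstrass canonical form yields the blocks $(I_{n_0},A_0)$ and $(N_\alpha,I_{|\alpha|})$. Note the convention in \eqref{KCF_1}: the pencil is $sE-A$, the finite eigenvalues are carried by $A_0$, and the infinite eigenvalues by the nilpotent $N_\alpha$.

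The pencil is called regular if $m=n$ and $\det(\lambda E-A)\not\equiv 0$. In that case choose $\mu\in\mathbb{K}$ with $\mu E-A$ invertible. Then $(\mu E-A)^{-1}E$ has a Jordan form, and separating its invertible part from its nilpotent part (Fitting decomposition) gives the two regular blocks. The relation between the two parts is the Möbius change of variables $\lambda\mapsto(\lambda-\mu)^{-1}$, and a rescaling brings the blocks to the normal shapes $I_{n_0},A_0$ and $N_\alpha,I$.

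If the pencil is singular, then the columns of $\lambda E-A$ are linearly dependent over $\mathbb{K}(\lambda)$, or the rows are (in which case we pass to the transposed pencil, giving the $L_\gamma^\top$, $K_\gamma^\top$ blocks). In the column-dependent case there is a nonzero polynomial vector $x(\lambda)=\sum_{j=0}^{\varepsilon}x_j\lambda^j$ with $(\lambda E-A)x(\lambda)=0$; choose one of minimal degree $\varepsilon$. Comparing coefficients gives
\[
Ax_0=0,\qquad Ex_{j-1}=Ax_j\ (j=1,\dots,\varepsilon),\qquad Ex_\varepsilon=0.
\]
Next, use minimality to show two things. First, $x_0,\dots,x_\varepsilon$ are linearly independent. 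Second, $Ex_0,\dots,Ex_{\varepsilon-1}$ are linearly independent, or equivalently $Ax_1,\dots,Ax_\varepsilon$ are. Completing these vectors to bases gives invertible $W,V$ that transform the pencil to
\[
\begin{bmatrix} L_{\varepsilon+1} & D\\ 0 & \hat E\end{bmatrix},\qquad \begin{bmatrix} K_{\varepsilon+1} & F\\ 0 & \hat A\end{bmatrix}.
\]
Here $L_{\varepsilon+1},K_{\varepsilon+1}$ are $\varepsilon\times(\varepsilon+1)$ (up to the ordering conventions of the paper). The pencil $\lambda\hat E-\hat A$ has no polynomial kernel vectors of degree $<\varepsilon$.

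The main obstacle is eliminating the coupling blocks $D,F$. We look for transformations of the form $\begin{bmatrix} I & X\\ 0& I\end{bmatrix}$ on the left and $\begin{bmatrix} I & Y\\ 0& I\end{bmatrix}$ on the right. These kill $D,F$ exactly when the Sylvester-type equations
\[
L_{\varepsilon+1}Y+X\hat E=-D,\qquad K_{\varepsilon+1}Y+X\hat A=-F
\]
are solvable. I would follow Gantmacher's argument: write these in pencil form, $(\lambda L-K)Y+X(\lambda \hat E-\hat A)=-(\lambda D-F)$, and solve entrywise by recursion along the staircase structure of $L,K$. Solvability at the final step uses exactly the fact that $\lambda\hat E-\hat A$ has no kernel polynomial of degree below $\varepsilon$. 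If this recursion became unwieldy, I would instead argue by dimension count: the linear map $(X,Y)\mapsto(L Y+X\hat E,\,KY+X\hat A)$ is surjective, since by minimality of $\varepsilon$ its adjoint is injective.

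With the coupling removed, induction on the size of the remaining pencil $(\hat E,\hat A)$ gives the full block structure. The degenerate sizes $\varepsilon=0$ (a zero column, i.e.\ $\beta_i=1$) and zero rows ($\gamma_i=1$) are covered by the convention extending the notation to entries equal to $1$.

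For uniqueness, the multi-indices are determined by invariants that do not change under $(E,A)\mapsto(WEV,WAV)$. The $\beta_i-1$ are the minimal column indices, i.e.\ the degrees of a minimal polynomial basis of $\ker_{\mathbb{K}(\lambda)}(\lambda E-A)$, and the $\gamma_i-1$ are the minimal row indices, defined analogously for the transpose. These are well defined because the degree sequence of a minimal basis is independent of the basis chosen. The multi-index $\alpha$ and the similarity class of $A_0$ are then fixed by the invariant factors, i.e.\ the elementary divisors, of the regular part: the finite ones give the Jordan structure of $A_0$, and the infinite ones give $\alpha$. Equivalently, they are read off from the ranks of $(\lambda_0E-A)$ and its compounds. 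These are invariant, and the singular blocks contribute no elementary divisors.
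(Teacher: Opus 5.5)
The paper gives no proof of its own for this theorem; it cites Kronecker and Gantmacher (Ch.~XII). Your outline is precisely Gantmacher's argument: a minimal-degree polynomial kernel vector, splitting off an $(L,K)$ staircase block, removing the coupling through the Sylvester-type equations, the Weierstrass form for the regular remainder, and uniqueness via minimal indices and elementary divisors. It is correct and follows the same route.

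Your adjoint shortcut is valid, since its kernel corresponds to polynomial kernel vectors of $\lambda\hat E-\hat A$ of degree at most $\varepsilon-2$. The one claim you leave unjustified is that $\lambda\hat E-\hat A$, already in the triangular form, has no kernel polynomial of degree $<\varepsilon$. It holds because $\lambda L_{\varepsilon+1}-K_{\varepsilon+1}$ maps polynomial vectors of degree $\le\varepsilon-1$ bijectively onto those of degree $\le\varepsilon$, so any such $\hat x$ would lift to a kernel vector of the whole pencil of degree $<\varepsilon$.
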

 The matrix pair $(WEV, WAV)$ in \eqref{KCF_1} is called \emph{quasi-Kronecker form} of the matrix pair $(E,A)$. If, in addition, the matrix $A_0$ in \eqref{KCF_1} is in Jordan canonical form, then \eqref{KCF_1} is called \emph{Kronecker canonical form}. Note that if $\mathbb{K}=\mathbb{R}$, the Jordan canonical form corresponds to the so-called real Jordan canonical form (see \cite{LancasterRodman2005}, \cite{Puche_Reis_2018_ConstantcoefficientDAE}). 

Some of the parameters $n_0$, $n_{\alpha}$, $n_{\beta}$, and $n_{\gamma}$ in \eqref{KCF_1} are possible to be equal to zero. This means that the corresponding blocks are absent in \eqref{KCF_1}. 
Note that $\beta=(1,\ldots,1)$ (respectively, $\gamma=(1,\ldots,1)$) can occur only if at least one other block is present in \eqref{KCF_1}; i.e., $n_0>0$, $n_\alpha>0$, or $n_\gamma>0$ (respectively, $n_\beta>0$).
 
In particular, if $(E,A)$ is a regular matrix pair, then the blocks $(L_{\beta}, K_{\beta})$ and $(L_{\gamma}^\top, K_{\gamma}^\top)$ do not appear in the quasi-Kronecker form. In this case, the pair
\begin{equation}\label{WCF_1}
WEV=
\begin{bmatrix} I_{n_0}& 0 \\
0&  N_\alpha
\end{bmatrix} ~~ \text{and}~~~~ WAV=
\begin{bmatrix} A_0& 0 \\
0& I_{|\alpha|}
\end{bmatrix}
\end{equation}
is called \emph{quasi-Weierstrass form} of $(E,A)$
\cite{BT12,Berger2013addition}.

\subsection{Generalized inverses}

\begin{definition}
    Let $A\in\mathbb{R}^{m\times n}$ be any matrix. A matrix $A^g\in\mathbb{R}^{n\times m}$ is called a generalized inverse of $A$ if it satisfies the condition:
\begin{equation*}
    AA^gA=A.
\end{equation*}
\end{definition}
It is well known that a generalized inverse always exists for any matrix $A\in\mathbb{R}^{m\times n}$.
This generalized inverse is sometimes also called 
$\{1\}$-inverse as it satisfies only the first of the four Penrose equations which are used for the definition of the Moore--Penrose inverse, cf.\   \cite{benisrael2006GeneralizedInverses}. 
However, it is not necessarily unique.

 \begin{lemma}\label{lemma_Solution_AE_via_GenInv}
Let \(A\in \mathbb{R}^{m\times n}\), \(b\in\mathbb{R}^m\), and let \(A^g\in\mathbb{R}^{n\times m}\) be any generalized inverse of \(A\). Then the equation
\begin{equation}\label{le}
Ax=b
\end{equation}
is solvable if and only if
\begin{equation}\label{AAgb=b}
AA^g b=b.
\end{equation}
In this case, all solution of \eqref{le} are given by
 \begin{equation}\label{Solution_via_GeneralizedInverse}
    x=A^gb+(I_n-A^gA)z,
 \end{equation}
 for arbitrary $z\in\mathbb{R}^n$.
The solution set from \eqref{Solution_via_GeneralizedInverse},
\[
\mathcal S(A^g):=\bigl\{\,A^g b+(I_n-A^gA)z:\ z\in\mathbb{R}^n\,\bigr\},
\]
is independent of the choice of the generalized inverse $A^g$.
\end{lemma}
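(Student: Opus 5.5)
The proof splits into three parts: the solvability criterion, the description of all solutions, and the independence of $\mathcal S(A^g)$ from the choice of generalized inverse. Throughout, the only property used is the defining identity $AA^gA=A$.

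\emph{Solvability criterion.} If \eqref{AAgb=b} holds, then $x=A^gb$ solves \eqref{le}, so the equation is solvable. Conversely, suppose $Ax_0=b$ for some $x_0$. Then
\[
AA^gb=AA^gAx_0=Ax_0=b,
\]
which is \eqref{AAgb=b}.

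\emph{Description of all solutions.} Assume \eqref{AAgb=b} holds. For any $z\in\mathbb{R}^n$ we compute
\[
A\bigl(A^gb+(I_n-A^gA)z\bigr)=AA^gb+(A-AA^gA)z=b+0=b,
\]
so every vector of the form \eqref{Solution_via_GeneralizedInverse} is a solution. For the reverse inclusion, let $x$ be any solution and choose $z=x$. Then
\[
A^gb+(I_n-A^gA)x=A^gAx+x-A^gAx=x,
\]
using $b=Ax$. Hence every solution has the form \eqref{Solution_via_GeneralizedInverse}.

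\emph{Independence of $\mathcal S(A^g)$.} I will not compare two generalized inverses $A^g_1,A^g_2$ directly. Instead, when \eqref{le} is solvable, the previous part shows that $\mathcal S(A^g)$ is exactly the solution set of $Ax=b$, which does not involve $A^g$ at all. So the sets coincide for all choices of generalized inverse. A subtlety is that in the unsolvable case the set $\mathcal S(A^g)$ is not a set of solutions, so the independence claim should be read as referring to the solvable case. One could add the remark that $(I_n-A^gA)\mathbb{R}^n=\ker A$ holds for every $g$-inverse, but in the unsolvable case the offset $A^gb$ may still depend on the choice. The main obstacle is therefore only this interpretive point, which I would handle by stating explicitly that the independence is asserted when \eqref{AAgb=b} holds.
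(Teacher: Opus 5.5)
Your proof is correct. For the independence claim it takes a different and more economical route than the paper.

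\emph{The paper's route.} It cites Ben-Israel--Greville for the solvability criterion and the description of all solutions. It then proves independence via the parametrization of all generalized inverses, $\widetilde A^g = A^g + Y - A^gAYA A^g$ with $Y\in\mathbb{R}^{n\times m}$. Using $AA^gb=b$, it computes $\widetilde A^g b+(I_n-\widetilde A^gA)z = A^gb+(I_n-A^gA)\bigl[z+Y(b-Az)\bigr]$ and finishes with a symmetry argument.

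\emph{Your route.} You prove the first two parts directly from $AA^gA=A$. You then observe that they already identify $\mathcal S(A^g)$ with the solution set $\{x: Ax=b\}$, so independence is immediate.

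\emph{What each buys.} Yours is self-contained: it needs no external characterization of $\{1\}$-inverses. It also shows that independence is really a corollary of the description of all solutions. The paper's computation additionally gives an explicit reparametrization $z\mapsto z+Y(b-Az)$ between the two descriptions.

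\emph{Your caveat.} Reading the independence claim as a statement about the solvable case matches the paper, whose computation also assumes $AA^gb=b$. The restriction is in fact necessary. For $b\notin\operatorname{ran}A$ one has $A(\widetilde A^g b-A^gb)=AY(b-AA^gb)$, which is nonzero for suitable $Y$ when $A\neq 0$. Since $\operatorname{ran}(I_n-A^gA)=\ker A$ for every generalized inverse, as you remark, the two sets then genuinely differ.
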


\begin{proof}
The first part of the lemma can be found, e.g., in 
Corollary~2 \cite[p.~53]{benisrael2006GeneralizedInverses}.
It remains to show that $\mathcal S(A^g)$ is independent of the choice of the generalized inverse.
Fix some generalized inverse $A^g$. By Corollary~1 \cite[p.~52]{benisrael2006GeneralizedInverses}, any other generalized inverse $\widetilde A^g$ can be written as
\[
\widetilde A^g \;=\; A^g + Y - A^g A\,Y\,A\,A^g
\qquad\text{for some } Y\in\mathbb{R}^{n\times m}.
\]
An arbitrary $x$ from $\mathcal S(\widetilde A^g)$ has the form 
\[
x \;:=\; \widetilde A^g b + (I_n-\widetilde A^g A)\,z
\]
for some $z\in\mathbb{R}^n$. Using $AA^g b=b$ and $AA^gA=A$, we compute
\begin{align*}
x
&= \bigl(A^g + Y - A^g A Y A A^g \bigr)b
   + z - \bigl(A^g + Y - A^g A Y A A^g \bigr)A z\\
&= A^g b + (I_n - A^g A)z \;+\; \bigl( Y b - A^g A Y A A^g b - Y A z + A^g A Y A A^g A z \bigr)\\
&= A^g b + (I_n - A^g A)z \;+\; \bigl( Y - A^g A Y \bigr) \bigl( b - A z \bigr)\\
&= A^g b + (I_n - A^g A)\bigl[\, z + Y\bigl(b-A z\bigr) \,\bigr].
\end{align*}
Thus $x\in\mathcal S(A^g)$
and we have shown that $\mathcal S(\widetilde A^g)\subseteq \mathcal S(A^g)$. By symmetry (interchanging the roles of $A^g$ and $\widetilde A^g$), the reverse inclusion holds. 
\end{proof}

Note that condition \eqref{AAgb=b} is equivalent to requiring that $b$ belongs to $\text{ran}~A$.
If $A$ is square and invertible, then it has one and only one generalized
inverse which coincides with the ordinary inverse $A^{-1}$.

In addition to the generalized inverse, we use left and right inverses. These are special cases of generalized inverses and they are relevant for matrices with full column rank or full row rank, respectively.

\begin{definition}
  A matrix $A_L^{-1}\in\mathbb{R}^{n\times m}$ is a left inverse of $A$ if it satisfies
  \begin{equation*}
      A_L^{-1}A=I_n,
  \end{equation*}
where $I_n$ is the $n\times n$ identity matrix.
\end{definition}
A left inverse exists if and only if $A$ has full column rank ($\text{rank}A=n$).  It is not necessarily unique. For matrices of full column rank, 
\begin{equation}\label{Alina}
A_L^{-1} :=(A^\top A)^{-1}A^\top
\end{equation}
is an example for a left inverse.

\begin{definition}
  A matrix $A_R^{-1}\in\mathbb{R}^{n\times m}$ is a right inverse of $A$ if it satisfies
  \begin{equation*}
      AA_R^{-1}=I_m,
  \end{equation*}
where $I_m$ is the $m\times m$ identity matrix.
\end{definition}
A right inverse exists if and only if $A$ has full row rank ($\text{rank}A=m$).  It is not necessarily unique. For matrices of full row rank, 
\begin{equation}\label{right_inverse}
A_R^{-1} :=A^\top(A A^\top)^{-1}
\end{equation}
is an example for a right inverse.

\section{Block wise existence and uniqueness for the boundary value problem}

Let  $W \in \mathbb{R}^{m\times m}$ and $V \in \mathbb{R}^{n\times n}$ be invertible matrices that transform the matrix pair $(E,A)$, associated with equation \eqref{dae}, into the quasi-Kronecker form \eqref{KCF_1} with $A_0\in \mathbb{R}^{n_0\times n_0}$. By applying the transformation
\begin{equation}\label{x_trans}x(t)=V\left(x_0^\top(t), x_1^\top(t), x_2^\top(t), x_3^\top(t)\right)^{\top},\end{equation}
where the splitting corresponds to the block sizes in \eqref{KCF_1}, and multiplying both sides of  equation \eqref{dae} from the left by $W$, we obtain 
\begin{subequations}\label{111}\begin{align}
   \dot{x}_0(t)&=A_0x_0(t)+f_0(t),\label{11x_difeq}\\
N_{\alpha}\dot{x}_1(t)&=x_1(t)+f_1(t),\label{11x_alg_eq}\\
L_{\beta}\dot{x}_2 (t)&=K_{\beta}x_2(t)+
f_2(t),\label{11x_under_eq}\\
L_{\gamma}^\top\dot{x}_3(t)&=K_{\gamma}^\top x_3(t)+f_3(t),\label{11x_over_eq}
\end{align}
\end{subequations}
with
\begin{equation*}
   Wf(t)=\left(f_0^\top(t), f_1^\top(t), f_2^\top(t), f_3^\top(t)\right)^{\top},
\end{equation*}
where $f_0\in C([0,T],\mathbb{R}^{n_0})$, $f_1\in C([0,T],\mathbb{R}^{|\alpha|})$, $f_2 \in C([0,T],\mathbb{R}^{|\beta|-n_\beta})$, and $f_3\in C([0,T],\mathbb{R}^{|\gamma|})$. The transformation \eqref{x_trans} also induces a corresponding block decomposition of the boundary condition matrices:
\begin{equation*}\label{BC_blocks}
   BV=[B_0, B_1, B_2, B_3], \quad  CV=[C_0, C_1, C_2, C_3], \quad d = \sum\limits_{i=0}^3 d_i.
\end{equation*}
Here,
\[
\begin{aligned}
    &B_0, C_0 \in \mathbb{R}^{l \times n_0}, \quad
    B_1, C_1 \in \mathbb{R}^{l \times |\alpha|}, \\
    &B_2, C_2 \in \mathbb{R}^{l \times |\beta|}, \quad
    B_3, C_3 \in \mathbb{R}^{l \times (|\gamma| - n_{\gamma})}, \quad
    d_i \in \mathbb{R}^l \text{ for } i = 0,1,2,3.
\end{aligned}
\]
The total dimension is preserved:
\[
n_0 + |\alpha| + |\beta| + |\gamma| - n_{\gamma} = n.
\]
Thus, the boundary condition \eqref{BC} decomposes into the component-wise form:
\begin{equation}\label{BC_decomposed}
    B_i x_i(0) + C_i x_i(T) = d_i, \quad i = 0,1,2,3.
\end{equation}

The four equations in \eqref{111} correspond to different structural components of the system, which are commonly referred to as follows: the ODE part \eqref{11x_difeq},  the nilpotent part \eqref{11x_alg_eq}, the underdetermined part \eqref{11x_under_eq}, and
the overdetermined part \eqref{11x_over_eq}. Each part exhibits distinct solution behaviors, particularly in terms of the existence and uniqueness of solutions, which depend on the given inhomogeneities and the consistency conditions imposed on them.

Due to the block structure of the Kronecker canonical form \eqref{KCF_1},
the existence and uniqueness properties for the overall differential-algebraic equation \eqref{dae} can be deduced by analyzing the boundary value problems for each part of \eqref{111} individually.
Moreover, as the matrices $N_{\alpha}$, $L_{\beta}$, $K_{\beta}$, $L_{\gamma}^\top$, and $K_{\gamma}^\top$ have itself a block-diagonal structure, we will in a first step assume that each of these matrices consists of a single block, meaning that each of the multi-indices $\alpha$, $\beta$, and $\gamma$ consists of a single element.

\subsection{Existence and uniqueness theorem for the ODE part}
Consider the boundary value problem
\begin{equation}\label{x_difeq}\dot{x}_0(t)=A_0x_0(t)+f_0(t), \end{equation}
   \begin{equation}\label{bc_dif}
    B_0x_0(0)+C_0x_0(T)=d_0,\end{equation}
    where $A_0\in \mathbb{R}^{n_0\times n_0}$, $B_0, C_0\in\mathbb{R}^{l\times n_0}$, $d_0\in\mathbb{R}^l$.

    Let us define the matrix $Q_0\in\mathbb{R}^{l\times n_0}$ and the vector $\widehat{d}_0\in\mathbb{R}^l$ as follows:
    \begin{equation}\label{Q_0&d_0_hat}
        Q_0:=B_0+C_0 e^{A_0T}, \quad \widehat{d}_0:=d_0-C_0\int\limits_0^T e^{A_0(T-s)}f_0(s)ds.
    \end{equation}

\begin{theorem}\label{th_ODE_part}
   Let $f_0\in C([0,T],\mathbb{R}^{n_0})$. Then the
    problem \eqref{x_difeq}, \eqref{bc_dif}
has a solution $x_0\in C^1([0,T],\mathbb{R}^{n_0})$ if and only if \begin{equation}\label{d_0_orth}
    \widehat{d}_0\in \textnormal{ran}~ Q_0.
    \end{equation}

 In this case, any solution $x_0$ has a representation of the form
\begin{equation}\label{sol_x0_gen}
    x_0(t)=\int\limits_0^t e^{A_0(t-s)}f_0(s)ds+e^{A_0t}\left[Q_0^g\widehat{d}_0+(I_{n_0}-Q_0^g Q_0)z\right],
\end{equation} where  $z$ is an arbitrary vector in $\mathbb{R}^{n_0}$.
\end{theorem}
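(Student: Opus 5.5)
The plan is to use the parameterization idea: take the initial value $\lambda := x_0(0)\in\mathbb{R}^{n_0}$ as a parameter and reduce the boundary value problem \eqref{x_difeq}, \eqref{bc_dif} to a linear algebraic system for $\lambda$. That system is then handled by Lemma~\ref{lemma_Solution_AE_via_GenInv}.

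First, since $f_0$ is continuous on $[0,T]$, the variation of constants formula gives the following. For every $\lambda\in\mathbb{R}^{n_0}$ the initial value problem $\dot x_0=A_0x_0+f_0$, $x_0(0)=\lambda$, has exactly one solution $x_0\in C^1([0,T],\mathbb{R}^{n_0})$, namely
\begin{equation*}
x_0(t)=e^{A_0t}\lambda+\int\limits_0^t e^{A_0(t-s)}f_0(s)\,ds .
\end{equation*}
Conversely, every $C^1$ solution of \eqref{x_difeq} has this form with $\lambda=x_0(0)$. The converse follows by differentiating $e^{-A_0t}x_0(t)$ and integrating. Thus the solutions of \eqref{x_difeq} are in bijection with $\lambda\in\mathbb{R}^{n_0}$.

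Next, I substitute this representation into \eqref{bc_dif}, using $x_0(0)=\lambda$ and $x_0(T)=e^{A_0T}\lambda+\int_0^T e^{A_0(T-s)}f_0(s)\,ds$. This gives
\begin{equation*}
\bigl(B_0+C_0e^{A_0T}\bigr)\lambda = d_0-C_0\int\limits_0^T e^{A_0(T-s)}f_0(s)\,ds, \quad\text{i.e.}\quad Q_0\lambda=\widehat d_0,
\end{equation*}
with $Q_0,\widehat d_0$ as in \eqref{Q_0&d_0_hat}. Hence $x_0$ solves the boundary value problem if and only if $\lambda=x_0(0)$ solves $Q_0\lambda=\widehat d_0$.

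Finally, I apply Lemma~\ref{lemma_Solution_AE_via_GenInv} with $A=Q_0$ and $b=\widehat d_0$. The system is solvable if and only if $Q_0Q_0^g\widehat d_0=\widehat d_0$, which is equivalent to \eqref{d_0_orth}. In that case all solutions are $\lambda=Q_0^g\widehat d_0+(I_{n_0}-Q_0^gQ_0)z$ with $z\in\mathbb{R}^{n_0}$ arbitrary, and this set does not depend on the choice of $Q_0^g$. Inserting this $\lambda$ into the variation of constants formula yields \eqref{sol_x0_gen}.

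There is no real obstacle. The only point needing care is the equivalence in both directions: every $C^1$ solution arises from some $\lambda$, and each solution $\lambda$ of the algebraic system produces a genuine solution. Both directions follow from uniqueness for the initial value problem.
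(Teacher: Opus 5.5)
Your proposal is correct and follows essentially the paper's proof: you parameterize by $\lambda=x_0(0)$, use the variation-of-constants formula to reduce the problem to $Q_0\lambda=\widehat d_0$, and solve that system with Lemma~\ref{lemma_Solution_AE_via_GenInv}. The only differences are cosmetic. The paper reaches $x_0(t)=e^{A_0t}\lambda+\int_0^t e^{A_0(t-s)}f_0(s)\,ds$ through the shifted unknown $u=x_0-\lambda$ with $u(0)=0$, whereas you write the formula directly; your explicit remark that every $C^1$ solution arises this way makes the ``only if'' direction slightly more transparent.
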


\begin{proof}
We apply the method of parameterization \cite{Dzhumabaev:1989} to problem \eqref{x_difeq}, \eqref{bc_dif}. By introducing a parameter $\lambda\in\mathbb{R}^{n_0}$ defined as
\begin{equation*}
    \lambda:=x_0(0),
\end{equation*}
and by substitution
\begin{equation*}
    u(t):=x_0(t)-\lambda,
\end{equation*}
Equation \eqref{x_difeq} is transformed into the initial value problem for the ordinary differential equation with parameter
\begin{equation*}\label{ivp_difpart}
\dot{u}(t)=A_0(u(t)+\lambda)+f_0(t),\quad
u(0)=0.
\end{equation*}

 For any fixed  $\lambda\in \mathbb{R}^{n_0}$ and any $f_0\in C([0,T],\mathbb{R}^{n_0})$, this problem has a unique solution
\begin{equation*}
\label{u_diff}
  u(t)=\int\limits_0^t e^{A_0(t-s)}(A_0\lambda+f_0(s)) ds.
  \end{equation*}

Hence,
\begin{equation*}
    x_0(t)=\lambda+u(t)=\left(I_{n_0}+\int\limits_0^t e^{A_0(t-s)}A_0ds\right)\lambda+\int\limits_0^t e^{A_0(t-s)}f_0(s)ds.
\end{equation*}

Taking into account the identity $I_{n_0}+\int\limits_0^t e^{A_0(t-s)} A_0ds=e^{A_0t}$, we obtain
\begin{equation}\label{x0_via_lambda}
    x_0(t)=e^{A_0t}\lambda+\int\limits_0^t e^{A_0(t-s)}f_0(s)ds.
\end{equation}

Substituting $x_0(0)$ and $x_0(T)$ into the boundary condition \eqref{bc_dif} yields that
the parameter $\lambda\in\mathbb{R}^{n_0}$ satisfies the system of algebraic equations
\begin{equation}\label{Q_syst_difpart}
    Q_0\lambda=\widehat{d}_0.
\end{equation}

If $\hat{d}_0\in \text{ran~}Q_0$, all solutions of \eqref{Q_syst_difpart} are given by
\begin{equation}\label{lambda_gen_difpart}
    \lambda=Q_0^g \widehat{d}_0+(I_{n_0}-Q_0^g Q_0)z,
\end{equation}
where  $z$ is an arbitrary vector in $\mathbb{R}^{n_0}$ and $Q_0^g$ is any generalized inverse of $Q_0$ (see \eqref{le} and \eqref{Solution_via_GeneralizedInverse}).  The representation \eqref{sol_x0_gen} for a solution of problem \eqref{x_difeq}, \eqref{bc_dif} is obtained by substituting  \eqref{lambda_gen_difpart}  into \eqref{x0_via_lambda}.
\end{proof}

\begin{corollary}\label{corollary_ODEpart_uniqueness}
  Let $f_0\in C([0,T],\mathbb{R}^{n_0})$. Then problem \eqref{x_difeq}, \eqref{bc_dif} is uniquely solvable if and only if, in addition to \eqref{d_0_orth},
\begin{equation}\label{rankQ_0}
    \emph{rank}~Q_0=n_0,
\end{equation}
in which case the solution is given by
\begin{equation}\label{sol_x0_unique}
    x_0(t)=\int\limits_0^t e^{A_0(t-s)}f_0(s)ds+e^{A_0t}(Q_0^\top Q_0)^{-1}Q_0^\top\widehat{d}_0.
\end{equation}
\end{corollary}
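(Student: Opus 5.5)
The plan is to rely entirely on the proof of Theorem~\ref{th_ODE_part}. There the map $\lambda\mapsto x_0$ given by \eqref{x0_via_lambda} is a bijection between solutions $\lambda$ of the algebraic system \eqref{Q_syst_difpart}, $Q_0\lambda=\widehat d_0$, and solutions $x_0\in C^1([0,T],\mathbb{R}^{n_0})$ of \eqref{x_difeq}, \eqref{bc_dif}. It is surjective because every solution satisfies \eqref{x0_via_lambda} with $\lambda=x_0(0)$, and injective because $x_0(0)=\lambda$ is recovered from $x_0$. Consequently, unique solvability of the boundary value problem is equivalent to unique solvability of $Q_0\lambda=\widehat d_0$.

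For the linear system, existence is exactly \eqref{d_0_orth} by Lemma~\ref{lemma_Solution_AE_via_GenInv}. Given existence, the solution set is $\lambda^*+\ker Q_0$, so it is a singleton if and only if $\ker Q_0=\{0\}$, i.e.\ $\operatorname{rank}Q_0=n_0$. This gives \eqref{rankQ_0}. Equivalently, in the language of Lemma~\ref{lemma_Solution_AE_via_GenInv}, the family \eqref{lambda_gen_difpart} is independent of $z$ if and only if $I_{n_0}-Q_0^gQ_0=0$, and this holds precisely when $Q_0$ has full column rank.

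For the explicit formula, I use that under \eqref{rankQ_0} the matrix $Q_0^\top Q_0\in\mathbb{R}^{n_0\times n_0}$ is invertible. Then $(Q_0^\top Q_0)^{-1}Q_0^\top$ is the left inverse \eqref{Alina}. It is in particular a generalized inverse, since $Q_0\bigl((Q_0^\top Q_0)^{-1}Q_0^\top\bigr)Q_0=Q_0$. Moreover $I_{n_0}-Q_0^gQ_0=0$ for this choice. Substituting it into \eqref{sol_x0_gen}, which is legitimate since the solution set does not depend on the chosen generalized inverse by Lemma~\ref{lemma_Solution_AE_via_GenInv}, yields \eqref{sol_x0_unique}.

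There is no real obstacle here. The only point needing care is that compatibility \eqref{d_0_orth} must be kept as a separate hypothesis: when $l>n_0$, full column rank alone does not make $\widehat d_0$ lie in $\operatorname{ran}Q_0$.
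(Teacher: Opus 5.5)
Your proposal is correct and follows essentially the same route as the paper: reduce to the algebraic system $Q_0\lambda=\widehat d_0$ from the proof of Theorem~\ref{th_ODE_part}, use that full column rank gives a trivial kernel, and substitute the left inverse $(Q_0^\top Q_0)^{-1}Q_0^\top$ for the generalized inverse in \eqref{sol_x0_gen}. Your explicit bijection $\lambda\leftrightarrow x_0$ via \eqref{x0_via_lambda} also proves the necessity direction (uniqueness forces $\ker Q_0=\{0\}$), which the paper's proof leaves implicit.
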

\begin{proof}
If condition \eqref{rankQ_0} is satisfied, then the matrix $Q_0$ has full column rank, implying that its kernel contains only the zero vector. Consequently, the system of equations  \eqref{Q_syst_difpart} can have at most one solution. Moreover, the fulfillment of condition \eqref{d_0_orth} ensures the existence of a solution.

Since $Q_0$ has full column rank, it admits a left inverse, denoted by $(Q_0)_L^{-1}$, which serves as a generalized inverse of  $Q_0$.
The unique solution is given by expression \eqref{sol_x0_unique}, which is derived from \eqref{sol_x0_gen} by substituting $Q_0^g$ with
$(Q_0)_L^{-1}=(Q_0^\top Q_0)^{-1}Q_0^\top$, see
\eqref{Alina}.
\end{proof}

\subsection{Existence and uniqueness theorem for the nilpotent part}
Consider the boundary value problem
\begin{equation}\label{x_alg_eq}
N_{\alpha}\dot{x}_1(t)=x_1(t)+f_1(t),
\end{equation}
   \begin{equation}\label{bc_alg}
    B_1x_1(0)+C_1x_1(T)=d_1,\end{equation}
    where $\alpha\in\mathbb{N}$, $N_{\alpha}\in\mathbb{R}^{\alpha\times\alpha}$, $B_1, C_1 \in\mathbb{R}^{l\times\alpha}$, $d_1\in\mathbb{R}^l$.
\begin{theorem}\label{th_nilpotent}
    Let $f_1\in C^{\alpha}([0,T],\mathbb{R}^{\alpha})$. Then the problem \eqref{x_alg_eq}, \eqref{bc_alg} has a unique solution $x_1\in C^1([0,T],\mathbb{R}^{\alpha})$ if and only if
    \begin{equation}\label{cond_x1}
        -\sum\limits_{i=0}^{\alpha-1}\left[B_1N_{\alpha}^i f_1^{(i)}(0)+C_1N_{\alpha}^i f_1^{(i)}(T)\right]=d_1.
    \end{equation}
   The solution is given by \begin{equation}\label{sol_x1}
        x_1(t)=-\sum\limits_{i=0}^{\alpha-1}N_{\alpha}^i f_1^{(i)}(t).
    \end{equation}
\end{theorem}
\begin{proof} Equation \eqref{x_alg_eq} has a unique solution, which is given by \eqref{sol_x1} (see, e.g., \cite[Lemma 2.8]{Kunkel:2006}). It follows immediately that problem \eqref{x_alg_eq}, \eqref{bc_alg} is solvable if and only if $x_1(t)$,  when substituted into the boundary condition \eqref{bc_alg}, satisfies the identity \eqref{cond_x1}.
\end{proof}

\subsection{Existence theorem for the underdetermined part}\label{Subsection_UNDERDET_PART}
Consider the boundary value problem \begin{equation}\label{x_under_eq}
L_{\beta}\dot{x}_2 (t)=K_{\beta}x_2(t)+
f_2(t),
\end{equation}
\begin{equation}\label{bc_under}
    B_2x_2(0)+C_2x_2(T)=d_2,\end{equation}
where $\beta\in\mathbb{N}$, $B_2, C_2 \in\mathbb{R}^{l\times\beta}$, $d_2\in\mathbb{R}^l$.

If $\beta=1$ in \eqref{x_under_eq}, then $x_2$ is not constrained by any equation and can therefore be chosen arbitrarily. The only condition for the solvability of problem \eqref{x_under_eq}, \eqref{bc_under} in this case is that $x_2$ satisfies the boundary condition \eqref{bc_under} with $B_2, C_2, d_2\in \mathbb{R}$.

Let us suppose $\beta>1$ and define
    \begin{equation}\label{Q2}
Q_2:=\left(B_2+C_2e^{N_{\beta}T}\right)L_{\beta}^\top,
    \end{equation}
    \begin{equation}\label{h_beta}
h_{\beta}(t; \varphi(t)):=e_{1,\beta}\varphi(t)+\int\limits_0^t e^{N_{\beta}(t-s)}L_{\beta}^\top\left[f_2(s)+e_{1,\beta-1}\varphi(s)\right]ds,
    \end{equation}
    \begin{equation}\label{d2}
 \widehat{d}_2:=d_2-B_2e_{1,\beta}\varphi(0)-C_2 h_{\beta}(T;  \varphi(T)).
    \end{equation}
  Here  $e_{1,\beta}=(1,0,\ldots,0)^\top\in \mathbb{R}^{\beta}$, $e_{1,\beta-1}=(1,0,\ldots,0)^\top\in \mathbb{R}^{\beta-1}$, and $\varphi:[0,T]\rightarrow \mathbb{R}$ is an arbitrary continuous function.
\begin{theorem}\label{th_under_part}
   Let $\beta>1$ and $f_2\in C([0,T],\mathbb{R}^{\beta-1})$. Then, for arbitrary $\varphi\in C([0,T],\mathbb{R})$, the problem \eqref{x_under_eq}, \eqref{bc_under} has a solution $x_2\in C^1([0,T],\mathbb{R}^{\beta})$ if and only if \begin{equation*}\label{d_2_orth}
    \widehat{d}_2\in \textnormal{ran}~ Q_2.
    \end{equation*}

 In this case, any solution $x_2(t)$ has a representation
\begin{equation}\label{sol_x2_gen}
    x_2(t)=h_{\beta}(t; \varphi(t))+ e^{N_{\beta}t}L_{\beta}^\top\left[Q_2^g\widehat{d}_2+(I_{\beta-1}-Q_2^g Q_2)z\right],
\end{equation} where  $z$ is an arbitrary vector in $\mathbb{R}^{\beta-1}$.
\end{theorem}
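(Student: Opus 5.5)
The plan is to mimic the parameterization argument from the proof of Theorem~\ref{th_ODE_part}. First I will rewrite \eqref{x_under_eq} componentwise. Since $L_\beta=[0,\,I_{\beta-1}]$ and $K_\beta=[I_{\beta-1},\,0]$, writing $x_2=(\xi_1,\ldots,\xi_\beta)^\top$ the equation reads
\[
\dot\xi_{i+1}(t)=\xi_i(t)+f_{2,i}(t),\qquad i=1,\ldots,\beta-1 .
\]
So the first component $\xi_1$ is not constrained by the equation, and I set $\xi_1:=\varphi$. The remaining components $y:=(\xi_2,\ldots,\xi_\beta)^\top\in\mathbb{R}^{\beta-1}$ are then determined by successive integration once their initial values are fixed. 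Following the method of parameterization, I introduce the parameter
\[
\lambda:=y(0)=(\xi_2(0),\ldots,\xi_\beta(0))^\top\in\mathbb{R}^{\beta-1},
\]
so that $x_2(0)=e_{1,\beta}\varphi(0)+L_\beta^\top\lambda$.

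Next I pass to the shifted unknown $w(t):=x_2(t)-e_{1,\beta}\varphi(t)=L_\beta^\top y(t)$, which has first component identically zero. Using that $N_\beta$ is the lower shift, i.e.\ $(N_\beta w)_i=w_{i-1}$, and that $L_\beta^\top e_{1,\beta-1}=e_{2,\beta}$, I will check directly that the componentwise system is equivalent to the initial value problem
\[
\dot w(t)=N_\beta w(t)+L_\beta^\top\bigl[f_2(t)+e_{1,\beta-1}\varphi(t)\bigr],\qquad w(0)=L_\beta^\top\lambda .
\]
The first row of this system reads $\dot w_1=0$ with $w_1(0)=0$, consistent with $w_1\equiv 0$. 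The $\varphi$-term supplies the missing $\xi_1$ in the equation for $\dot\xi_2$, because $w_1=0$. Variation of constants then gives
\[
x_2(t)=h_\beta(t;\varphi(t))+e^{N_\beta t}L_\beta^\top\lambda,
\]
with $h_\beta$ as in \eqref{h_beta}. In particular $h_\beta(0;\varphi(0))=e_{1,\beta}\varphi(0)$. Conversely, for every $\lambda$ this formula defines a solution of \eqref{x_under_eq} with $\xi_1=\varphi$.

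I then substitute $x_2(0)$ and $x_2(T)$ into \eqref{bc_under}. Collecting the $\lambda$-terms yields exactly the algebraic system $Q_2\lambda=\widehat d_2$, with $Q_2$ from \eqref{Q2} and $\widehat d_2$ from \eqref{d2}. By Lemma~\ref{lemma_Solution_AE_via_GenInv}, this system is solvable if and only if $\widehat d_2\in\operatorname{ran}Q_2$. In that case all its solutions are $\lambda=Q_2^g\widehat d_2+(I_{\beta-1}-Q_2^gQ_2)z$ with $z\in\mathbb{R}^{\beta-1}$, and inserting this into the formula for $x_2$ gives \eqref{sol_x2_gen}. Since every solution of the boundary value problem with first component $\varphi$ arises from some $\lambda$ in this way, both directions of the equivalence follow.

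The step I expect to need the most care is the equivalence of the shifted first-order form with \eqref{x_under_eq}, in particular the bookkeeping of the terms $e_{1,\beta}\varphi$ and $L_\beta^\top e_{1,\beta-1}\varphi$. A regularity point also needs a remark. The components $\xi_2,\ldots,\xi_\beta$ are automatically $C^1$ when $\varphi$ and $f_2$ are continuous. However, $x_2\in C^1$ additionally requires $\xi_1=\varphi\in C^1$, so I would note that $\varphi$ must be taken continuously differentiable for $x_2$ to be a classical solution; the algebraic argument is otherwise unaffected.
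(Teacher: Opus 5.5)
Your proposal is correct and follows the paper's proof. You fix the free component $x_{2,1}=\varphi$, take the initial value of the remaining $\beta-1$ components as parameter, obtain $x_2(t)=h_\beta(t;\varphi(t))+e^{N_\beta t}L_\beta^\top\lambda$, and reduce the boundary condition to $Q_2\lambda=\widehat{d}_2$, which you solve via Lemma~\ref{lemma_Solution_AE_via_GenInv}.

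The only difference is cosmetic: you solve the lifted system for $w=L_\beta^\top y$ directly in $\mathbb{R}^\beta$, whereas the paper solves with $N_{\beta-1}$ and then uses $L_\beta^\top e^{N_{\beta-1}t}=e^{N_\beta t}L_\beta^\top$. Your remark that $\varphi$ must be $C^1$ for $x_2\in C^1([0,T],\mathbb{R}^\beta)$ is also correct, and the paper's statement overlooks it.
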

\begin{proof}
    Let us rewrite equation \eqref{x_under_eq} componentwise as follows:
\begin{gather*}\label{underdet_component}
\dot{x}_{2,2}(t)=x_{2,1}(t)+f_{2,1}(t),\notag\\
\vdots \\
\dot{x}_{2,\beta}(t)=x_{2,\beta-1}(t)+f_{2,\beta-1}(t).\notag
\end{gather*}
Here we have $\beta-1$ equations for $\beta$ unknown functions $x_{2,1}, \ldots,  x_{2,\beta}$.
Following the procedure in \cite{berger_diss}, we write \eqref{x_under_eq} in a different way. We drop the first component of the vector-valued 
function~$x_2$ and denote the resulting by $x_-$,
\begin{equation*}
x_-(t):=(x_{2,2}(t), \ldots, x_{2,\beta}(t))^\top,
\end{equation*}
we can rewrite equation \eqref{x_under_eq} as
\begin{align}
    \label{x-}
\dot{x}_-(t)=N_{\beta-1}x_-(t)+ f_2(t)+ e_{1,\beta-1}x_{2,1}(t).
\end{align}
Hence, a solution exists for all $f_2 \in C([0,T],\mathbb{R}^{\beta-1})$ and all $x_{2,1}\in C([0,T], \mathbb{R})$. 

We select $x_{2,1}=\varphi$, where $\varphi$ is an arbitrary continuous function on the interval $[0,T]$. We introduce a parameter
\begin{equation*}
    \mu:=x_-(0) \quad \mbox{and set} \quad
    v(t):=x_-(t)-\mu.
\end{equation*}
Then \eqref{x-} is transformed into the initial value problem
\begin{equation*}
    \dot{v}(t)=N_{\beta-1}(v(t)+\mu)+ f_2(t)+ e_{1,\beta-1}\varphi(t),\quad v(0)=0.
\end{equation*}
For fixed $\mu$, this problem has a solution of the form
\begin{equation*}\label{under_sol}
    v(t)=\int\limits_0^t e^{N_{\beta-1}(t-s)}\left(N_{\beta-1}\mu+f_2(s) + e_{1,\beta-1}\varphi(s)\right)ds.
\end{equation*}
Hence, we obtain
\begin{equation*}\begin{aligned}
        x_-(t)&=\mu+v(t)\\
        &=\left(I_{\beta-1}+\int\limits_0^t e^{N_{\beta-1}(t-s)}N_{\beta-1}ds\right)\mu+\int\limits_0^t e^{N_{\beta-1}(t-s)}[f_2(s) + e_{1,\beta-1}\varphi(s)]ds\\
        &=e^{N_{\beta-1}t}\mu+\int\limits_0^t e^{N_{\beta-1}(t-s)}[f_2(s) + e_{1,\beta-1}\varphi(s)]ds
\end{aligned}
\end{equation*}
and
\begin{equation*}
\begin{aligned}
    x_2(t)&=\begin{bmatrix}
        \varphi(t)\\x_-(t)
\end{bmatrix}\\
&=e_{1,\beta}\varphi(t)+L_{\beta}^\top\left(e^{N_{\beta-1}t}\mu+\int\limits_0^t e^{N_{\beta-1}(t-s)}\left[f_2(s) + e_{1,\beta-1}\varphi(s)\right]ds\right).
\end{aligned}
\end{equation*}
Taking into account 
$L_{\beta}^\top N_{\beta-1}=N_{\beta}L_{\beta}^\top$
and, hence, 
$L_{\beta}^\top e^{N_{\beta-1}}=e^{N_{\beta}}L_{\beta}^\top$,  we obtain with  \eqref{h_beta}
\begin{equation}\label{x2_gen}\begin{aligned}
    x_2(t)=e^{N_{\beta}t}L_{\beta}^\top \mu+h_{\beta}(t;\varphi(t)).
\end{aligned}\end{equation}

We substitute the expressions for $x_2(0)$ and $x_2(T)$, derived from \eqref{x2_gen}, into the boundary condition \eqref{bc_under}. This substitution leads to the system of algebraic equations in the parameter $\mu\in\mathbb{R}^{\beta-1}$ 
\begin{equation}\label{Q2mu=d2}
   Q_2\mu=\widehat{d}_2,
\end{equation}
where $Q_2$ and $\widehat{d}_2$ are defined by \eqref{Q2} and \eqref{d2}, respectively.

Equation \eqref{Q2mu=d2}  has a solution if and only if $\widehat{d}_2 \in \mbox{ran}\, Q_2$. In this case, all solutions of \eqref{Q2mu=d2} are given by (see \eqref{le} and \eqref{Solution_via_GeneralizedInverse})
\begin{equation*}
    \mu=Q_2^g\widehat{d}_2+(I_{\beta-1}-Q_2^g Q_2)z,
\end{equation*}
where  $z$ is an arbitrary vector in $\mathbb{R}^{\beta-1}$.

The general solution of the problem \eqref{x_under_eq}, \eqref{bc_under} is obtained by substituting this expression for $\mu$ into \eqref{x2_gen}, yielding \eqref{sol_x2_gen}.
\end{proof}

\subsection{Existence and uniqueness theorem for the overdetermined part}

Consider the boundary value problem
\begin{equation}\label{x_over_eq}
L_{\gamma}^\top\dot{x}_3(t)=K_{\gamma}^\top x_3(t)+f_3(t),
\end{equation}
\begin{equation}\label{bc_over}
    B_3x_3(0)+C_3x_3(T)=d_3,\end{equation}
where $\gamma\in\mathbb{N}$, $B_3, C_3 \in\mathbb{R}^{l\times(\gamma-1)}$, $d_3\in\mathbb{R}^l$.

If $\gamma=1$, then the variable $x_3$ does not appear in equation \eqref{x_over_eq}, which simplifies to $f_3=0$. Moreover, no boundary condition is imposed. 

\begin{theorem}\label{th_over_BVP}
    Let $\gamma>1$ and $f_3\in C^{\gamma}([0,T],\mathbb{R}^{\gamma})$. Then the problem \eqref{x_over_eq}, \eqref{bc_over} has a unique solution $x_3\in C^1([0,T],\mathbb{R}^{\gamma-1})$ if and only if
    \begin{itemize}
        \item[\emph{(a)}] $\sum\limits_{i=1}^{\gamma}f_{3,i}^{(\gamma-i)}(t)=0$;
         \item[\emph{(b)}]
         $-B_3 K_{\gamma}\sum\limits_{i=0}^{\gamma-1}N_{\gamma}^i f_3^{(i)}(0)-C_3K_{\gamma}\sum\limits_{i=0}^{\gamma-1}N_{\gamma}^i f_3^{(i)}(T)=d_3.$
        \end{itemize}

      The solution is given by \begin{equation}\label{sol_x3}
        x_3(t)=-K_{\gamma}\sum\limits_{i=0}^{\gamma-1}N_{\gamma}^i f_3^{(i)}(t).
    \end{equation}
\end{theorem}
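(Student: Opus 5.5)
We will write the overdetermined equation \eqref{x_over_eq} componentwise and solve it by back substitution, mirroring the proof of Theorem \ref{th_nilpotent}. With $x_3=(x_{3,1},\ldots,x_{3,\gamma-1})^\top$, the matrices $L_\gamma^\top,K_\gamma^\top\in\mathbb{R}^{\gamma\times(\gamma-1)}$ give the $\gamma$ scalar equations
\begin{equation*}
0=x_{3,1}+f_{3,1},\quad \dot x_{3,i-1}=x_{3,i}+f_{3,i}\ (i=2,\ldots,\gamma-1),\quad \dot x_{3,\gamma-1}=f_{3,\gamma}.
\end{equation*}
The first $\gamma-1$ equations determine $x_3$ uniquely and recursively: $x_{3,1}=-f_{3,1}$, $x_{3,2}=-\dot f_{3,1}-f_{3,2}$, and in general $x_{3,k}=-\sum_{i=1}^{k}f_{3,i}^{(k-i)}$. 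Here we use $f_3\in C^\gamma$ to guarantee that these derivatives exist and that $x_3\in C^1$. This establishes uniqueness of any solution, regardless of the boundary condition.

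Next we will match this formula to \eqref{sol_x3}. Because $N_\gamma$ is the lower shift, $(N_\gamma^i f_3^{(i)})_k=f_{3,k-i}^{(i)}$ for $k>i$ and $0$ otherwise. Hence $\bigl(\sum_{i=0}^{\gamma-1}N_\gamma^i f_3^{(i)}\bigr)_k=\sum_{j=1}^{k}f_{3,j}^{(k-j)}$. Multiplying by $K_\gamma$ keeps the first $\gamma-1$ components, which reproduces $x_{3,k}$ for $k\le\gamma-1$ and gives \eqref{sol_x3}. The discarded $\gamma$-th component is $\sum_{j=1}^{\gamma}f_{3,j}^{(\gamma-j)}$.

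We then substitute the recursive solution into the last equation $\dot x_{3,\gamma-1}=f_{3,\gamma}$. This yields $-\sum_{i=1}^{\gamma-1}f_{3,i}^{(\gamma-i)}=f_{3,\gamma}$, which is exactly condition (a). So the DAE \eqref{x_over_eq} is solvable if and only if (a) holds, and in that case its solution is unique and given by \eqref{sol_x3}. An alternative route is the identity $x_3=-K_\gamma\sum N_\gamma^i f_3^{(i)}$ obtained from $L_\gamma^\top\dot x_3-K_\gamma^\top x_3=f_3$ by formally inverting $(L_\gamma^\top \frac{d}{dt}-K_\gamma^\top)$ via a Neumann series in $N_\gamma$. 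We prefer the componentwise computation because it is more transparent.

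Finally, since the only candidate solution is \eqref{sol_x3}, the boundary condition \eqref{bc_over} holds if and only if substituting $x_3(0)$ and $x_3(T)$ gives the identity (b), exactly as in Theorem \ref{th_nilpotent}. The main obstacle is purely the index bookkeeping: verifying that $K_\gamma N_\gamma^i$ picks out the right components, and that the leftover equation is precisely (a) with the correct derivative orders. I would check this carefully for small $\gamma$ (e.g.\ $\gamma=2,3$) before writing out the general induction.
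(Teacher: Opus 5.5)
Your proof is correct and follows essentially the same route as the paper, which also writes \eqref{x_over_eq} componentwise but replaces your explicit back-substitution by padding $x_3$ to $x_+=(x_{3,1},\ldots,x_{3,\gamma-1},0)^\top$, so that $N_\gamma\dot x_+=x_+ + f_3$ and the nilpotent formula $x_+=-\sum_{i=0}^{\gamma-1}N_\gamma^i f_3^{(i)}$ gives \eqref{sol_x3} from the first $\gamma-1$ components and condition (a) from the vanishing last component. Condition (b) is then obtained exactly as you do, by substituting this unique candidate into \eqref{bc_over}.
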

\begin{proof}
    System \eqref{x_over_eq}, due to the structure of the matrices $L_{\gamma}^T$ and $K_{\gamma}^T$, is overdetermined. Indeed, let us consider its componentwise version:
\begin{align}\label{x_overdet_component}
0&=x_{3,1}(t)+f_{3,1}(t),\notag\\
\dot{x}_{3,1}(t)&=x_{3,2}(t)+f_{3,2}(t),\notag\\
&\vdots \\
\dot{x}_{3,\gamma-2}(t)&=x_{3,\gamma-1}(t)+f_{3,\gamma-1}(t),\notag\\
\dot{x}_{3,\gamma-1}(t)&=f_{3,\gamma}(t).\notag
\end{align}
So we have $\gamma$  equations for $\gamma-1$ unknown functions $x_{3,1},\ldots$, $x_{3,\gamma-1}$.
Following the procedure outlined in \cite{berger_diss}, we introduce the vector 
$$
x_+(t):=(x_{3,1}(t), \ldots, x_{3,\gamma-1}(t), 0)
$$
and represent system \eqref{x_overdet_component} in the equivalent form
\begin{equation*}
N_{\gamma}\dot{x}_+(t)=x_+(t)+f_3(t).
\end{equation*}
The solution of the above equation is
\begin{equation}\label{x+sol}
    x_+(t)=-\sum\limits_{i=0}^{\gamma-1} N_{\gamma}^i f_3^{(i)}(t).
\end{equation}
The first $\gamma-1$ components of the vector-valued function $x_+$ in \eqref{x+sol} constitute the solution of equation \eqref{x_over_eq}, which is given by \eqref{sol_x3}.
In addition, the last component of $x_+(t)$ in \eqref{x+sol} imposes  condition (a).

As we can see, the solution of equation \eqref{x_over_eq} is uniquely defined by expression \eqref{sol_x3}. This implies that problem \eqref{x_over_eq}, \eqref{bc_over} is solvable if and only if
$x_3$ from \eqref{sol_x3} satisfies the boundary condition \eqref{bc_over}.  The substitution of the values $x_3(0)$ and $x_3(T)$ leads directly to condition (b).
\end{proof}

\section{Existence and uniqueness theorems for the general boundary value problem}

In this section, we return to the general setting of the boundary value problem for the differential-algebraic equation \eqref{dae}, \eqref{BC}, building on the results from the previous section. Specifically, we relax the earlier assumption that the multiindices \( \alpha \), \( \beta \), and \( \gamma \) in the decomposition \eqref{111} consist of single elements only. We now consider
\[
\alpha \in \mathbb{N}^{n_{\alpha}}, \quad \beta \in \mathbb{N}^{n_{\beta}}, \quad \gamma \in \mathbb{N}^{n_{\gamma}}.
\]

We make the following assumptions on the components of $Wf$:
\begin{itemize}
    \item[\textbf{(A0)}] \( f_0 \in C([0,T], \mathbb{R}^{n_0}) \);
    \item[\textbf{(A1)}] \( f_1 = \left(f_{\alpha_1}^\top, \ldots, f_{\alpha_{n_{\alpha}}}^\top\right)^\top \in C^{\alpha_1}([0,T], \mathbb{R}^{\alpha_1}) \times \ldots \times C^{\alpha_{n_{\alpha}}}([0,T], \mathbb{R}^{\alpha_{n_{\alpha}}}) \);
    \item[\textbf{(A2)}] \( f_2 \in C([0,T], \mathbb{R}^{|\beta| - n_{\beta}}) \);
    \item[\textbf{(A3)}] \( f_3 = \left(f_{\gamma_1}^\top, \ldots, f_{\gamma_{n_{\gamma}}}^\top\right)^\top \in C^{\gamma_1}([0,T], \mathbb{R}^{\gamma_1}) \times \ldots \times C^{\gamma_{n_{\gamma}}}([0,T], \mathbb{R}^{\gamma_{n_{\gamma}}}) \).
\end{itemize}

As observed in the previous section, the solvability criteria for the ODE part and the underdetermined part with \( \beta_k > 1 \) are formulated in terms of certain matrices \( Q_0 \) and \( Q_2 \). In what follows, we distinguish the following two cases:
$ n_0 + |\beta| - n_{\beta} > 0$ (case I) and $ n_0 + |\beta| - n_{\beta} = 0$ (case II). 
As, by definition,  $|\beta|\geq n_\beta$, the quantity $ n_0 + |\beta| - n_{\beta}$
is always nonnegative.

\begin{itemize}
    \item[I.] \( n_0 + |\beta| - n_{\beta} > 0 \): that is, at least one of the blocks corresponding to the ODE part or the underdetermined part is present in the decomposition \eqref{KCF_1}; and if the ODE part is absent, then not all components of \( \beta \) are equal to 1.
    
    \item[II.] \( n_0 + |\beta| - n_{\beta} = 0 \): that is, 
    $n_0=0$ and $n_{\beta} =|\beta|$.
    The ODE part is absent in \eqref{KCF_1}, and the underdetermined part is either absent or consists entirely of components equal to 1.
\end{itemize}

\subsection{Case I: \( n_0 + |\beta| - n_{\beta} > 0 \)}

We set
\begin{equation*}
    M_{\beta}:=\text{diag}\left(e_{\beta_1,1},\ldots, e_{\beta_{n_{\beta}},1}\right)\in\mathbb{R}^{|\beta|\times n_{\beta}},
\end{equation*}
where $e_{\beta_k,1}=(1,0,\ldots,0)^\top\in\mathbb{R}^{\beta_k}$, $k=1,\ldots,n_{\beta}$,
are the first unit vectors. We also set $\overline{\beta}:=(\beta_1-1,\ldots,\beta_{n_{\beta}}-1)$, and
\begin{equation*}
    M_{\overline{\beta}}:=\text{diag}\left(e_{\beta_1-1,1},\ldots, e_{\beta_{n_{\beta}-1},1}\right)\in\mathbb{R}^{|\overline{\beta}|\times n_{\beta}}.
\end{equation*}
If some of the entries $\beta_i$ of $\beta$ are equal to 1, then $ M_{\overline{\beta}}$ contains a zero column vector at the corresponding positions, meaning that the row containing $e_{\beta_i,1}$ in $M_{\beta}$ is cancelled. Let, for instance, $\beta=(3,1,2)$. Then $\overline{\beta}=(2,0,1)$,
\begin{equation*}
    M_{\beta}=\begin{bmatrix}
        1&0&0\\0&0&0\\0&0&0\\0&1&0\\0&0&1\\0&0&0
    \end{bmatrix}~~\text{and} ~~M_{\overline{\beta}}=\begin{bmatrix}
        1&0&0\\0&0&0\\0&0&1
    \end{bmatrix}.
\end{equation*}

Let $\widetilde{\varphi}\in C([0,T],\mathbb{R}^{n_{\beta}})$ be an arbitrary function. If $|\beta|>n_\beta$, we define
\begin{equation*}
\label{h_beta_GEN}
\widetilde{h}_{\beta}(t; \widetilde{\varphi}(t)):=M_{\beta}\widetilde{\varphi}(t)+\int\limits_0^t e^{N_{\beta}(t-s)}L_{\beta}^\top\left[f_2(s)+M_{\bar{\beta}}\widetilde{\varphi}(s)\right]ds
.
\end{equation*}

We now construct a matrix \( Q \in \mathbb{R}^{l \times (n_0 + |\beta| - n_{\beta})} \) and a vector \( \widehat{d} \in \mathbb{R}^{l} \) as follows:

\begin{equation}\label{Q}
    Q :=      \begin{cases}
        \begin{bmatrix}
            B_0 + C_0 e^{A_0 T}, \; (B_2 + C_2 e^{N_{\beta} T}) L_\beta^\top
        \end{bmatrix}, &\text{if   } |\beta|>n_\beta,\\
        ~B_0 + C_0 e^{A_0 T},&\text{if   } |\beta|=n_\beta,
    \end{cases}   
\end{equation}

\begin{equation}\label{d_hat}
    \widehat{d} := d - \sum\limits_{i=0}^3 \bar{d}_i,
\end{equation}
where
\begin{equation*}
    \bar{d}_0:=C_0\int\limits_0^T e^{A_0(T-s)}f_0(s)ds,\qquad 
    \bar{d}_1:=-\sum\limits_{i=0}^{\nu_{\alpha}-1}\left(B_1 N_{\alpha}^i f_1^{(i)}(0)+C_1N_{\alpha}^i f_1^{(i)}(T)\right),
\end{equation*}
\begin{equation*}
    \bar{d}_2:=\begin{cases}B_2M_{\beta}\widetilde{\varphi}(0)+C_2 \widetilde{h}_{\beta}(T; \widetilde{\varphi}(T)),&\text{if   }|\beta|>n_\beta,\\
    B_2\widetilde{\varphi}(0)+C_2  \widetilde{\varphi}(T),&\text{if   }|\beta|=n_\beta,
    \end{cases}
\end{equation*}
\begin{equation*}
\bar{d}_3:=\begin{cases}
    -\sum\limits_{i=0}^{\nu_{\gamma}-1}\left(B_3K_{\gamma}N_{\gamma}^i f_3^{(i)}(0)+C_3K_{\gamma}N_{\gamma}^i f_3^{(i)}(T)\right),&\text{if   }|\gamma|>n_\gamma,\\
    ~0, & \text{if   } |\gamma|=n_\gamma.
\end{cases}
\end{equation*}
Here $\nu_{\alpha}:=\max \{\left.\alpha_k\right\vert k=1,\ldots,n_{\alpha}\}$, and $    \nu_{\gamma}:=\max \{\left.\gamma_k\right\vert k=1,\ldots,n_{\gamma}\}$.

We set \[
P_0 := \begin{bmatrix} I_{n_0} & O_{n_0 \times (|\beta| - n_{\beta})} \end{bmatrix}, \quad
P_2 := \begin{bmatrix} O_{(|\beta| - n_{\beta}) \times n_0} & I_{|\beta| - n_{\beta}} \end{bmatrix}.
\]

In the following theorem we characterize the solvability of the problem \eqref{dae}, \eqref{BC}. Together with Theorem \ref{Th1_second_case}, it is the main result.

\begin{theorem}\label{Th1}
Let \( W \) and \( V \) be invertible matrices that transform the matrix pair \( (E, A) \) in \eqref{dae} into the quasi-Kronecker canonical form \eqref{KCF_1} with
\(n_0 + |\beta| - n_{\beta} > 0\). Suppose that assumptions \textnormal{\textbf{(A0)}--\textbf{(A3)}} are satisfied.

Then, for any \( \widetilde{\varphi} \in C([0,T],\mathbb{R}^{n_{\beta}}) \), the boundary value problem \eqref{dae}, \eqref{BC} has a solution \( x \in C^1([0,T],\mathbb{R}^n) \) if and only if the following conditions hold:
\begin{itemize}
    \item[\emph{(a)}] \(
        \sum\limits_{i=1}^{\gamma_k} f_{\gamma_k,i}^{(\gamma_k - i)}(t) = 0,\quad  k = 1,\ldots,n_{\gamma};
    \)
    \item[\emph{(b)}] \( \widehat{d} \in \operatorname{ran} Q \).
\end{itemize}

Any solution \( x \) is given by
\begin{equation}\label{SOLUTION}
    x(t) = V
\begin{bmatrix}
x_0(t) \\
x_1(t) \\
x_2(t) \\
x_3(t)
\end{bmatrix},
\end{equation}
where
\begin{align*}
x_0(t) &= e^{A_0 t} P_0 \left(Q^g \widehat{d} + \big(I_{n_0 + |\beta| - n_{\beta}} - Q^g Q\big) z\right) 
          + \int_0^t e^{A_0(t - s)} f_0(s)\, ds, \\
x_1(t) &= -\sum\limits_{i=0}^{\nu_{\alpha} - 1} N_{\alpha}^i f_1^{(i)}(t), \\
x_2(t) &=\begin{cases}
   e^{N_{\beta} t} L_{\beta}^\top P_2 \left(Q^g \widehat{d} + \big(I_{n_0 + |\beta| - n_{\beta}} - Q^g Q\big) z\right) + \widetilde{h}_{\beta}(t;\widetilde{\varphi}(t)), & \text{if } |\beta|-n_{\beta}>0,\\
   \widetilde{\varphi}(t), & \text{if } |\beta|-n_{\beta}=0,
\end{cases} \\
x_3(t) &= -K_{\gamma} \sum\limits_{i=0}^{\nu_{\gamma} - 1} N_{\gamma}^i f_3^{(i)}(t), \quad \text{if } |\gamma| > n_\gamma.
\end{align*}
Here \( z \in \mathbb{R}^{n_0 + |\beta| - n_{\beta}} \) is an arbitrary vector 
and \(Q^g\) is any generalized inverse of \(Q\).
\smallskip

If \( |\gamma| = n_{\gamma} \), then \( x_3(t) \) is absent from the representation \eqref{SOLUTION}, and the solution consists only of the first three components:
\[
x(t) = V
\begin{bmatrix}
x_0(t) \\
x_1(t) \\
x_2(t)
\end{bmatrix}.
\]
\end{theorem}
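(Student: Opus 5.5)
The proof reduces the problem \eqref{dae}, \eqref{BC} to the decoupled system \eqref{111} and then assembles the block results of Section 3. Since $W$ and $V$ are invertible, $x\in C^1([0,T],\mathbb{R}^n)$ solves \eqref{dae}, \eqref{BC} if and only if $(x_0,x_1,x_2,x_3)$, defined by \eqref{x_trans}, solves \eqref{111} together with the single coupled condition $\sum_{i=0}^3 (B_i x_i(0)+C_ix_i(T))=d$. This condition does not split into \eqref{BC_decomposed} with prescribed $d_i$. So I will not solve the blocks' boundary problems separately. Instead I take the general solution of each differential part and substitute it into the one boundary condition.

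First, the differential parts. Every block of \eqref{11x_alg_eq} has a unique solution by Theorem~\ref{th_nilpotent} (Kunkel--Mehrmann lemma), blockwise under \textbf{(A1)}. Stacked, these give $x_1=-\sum_{i<\nu_\alpha}N_\alpha^i f_1^{(i)}$; the terms with $i\ge\alpha_k$ vanish because $N_{\alpha_k}^{\alpha_k}=0$. For \eqref{11x_over_eq}, the proof of Theorem~\ref{th_over_BVP} applied block by block shows that a solution exists if and only if (a) holds for every $k$, and then $x_3=-K_\gamma\sum_{i<\nu_\gamma}N_\gamma^i f_3^{(i)}$ is unique. Blocks with $\gamma_k=1$ contribute the condition $f_{\gamma_k}=0$, which is exactly (a) with $\gamma_k=1$, and no unknown. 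For \eqref{11x_difeq} the general solution is $e^{A_0t}\lambda+\int_0^te^{A_0(t-s)}f_0$ with $\lambda=x_0(0)$ free, by the parametrization in Theorem~\ref{th_ODE_part}. For \eqref{11x_under_eq}, the proof of Theorem~\ref{th_under_part}, applied to each block with the first components given by $\widetilde\varphi$, gives $x_2=e^{N_\beta t}L_\beta^\top\mu+\widetilde h_\beta(t;\widetilde\varphi(t))$ with $\mu\in\mathbb{R}^{|\beta|-n_\beta}$ free. This uses the blockwise identity $L_\beta^\top e^{N_{\overline\beta}t}=e^{N_\beta t}L_\beta^\top$, and the matrices $M_\beta$, $M_{\overline\beta}$ correctly encode the first components and the inhomogeneity $e_{1,\beta_k-1}\varphi_k$, including the degenerate blocks with $\beta_k=1$. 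If $|\beta|=n_\beta$, then simply $x_2=\widetilde\varphi$.

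Next I substitute these into the coupled boundary condition. The $x_1$ and $x_3$ terms produce the constants $\bar d_1$ and $\bar d_3$. The $x_0$ terms produce $(B_0+C_0e^{A_0T})\lambda+\bar d_0$. The $x_2$ terms produce $(B_2+C_2e^{N_\beta T})L_\beta^\top\mu+\bar d_2$. Setting $\theta=(\lambda^\top,\mu^\top)^\top$, so that $\lambda=P_0\theta$ and $\mu=P_2\theta$, the boundary condition becomes the linear system $Q\theta=\widehat d$, with $Q$ and $\widehat d$ as in \eqref{Q}, \eqref{d_hat}. Lemma~\ref{lemma_Solution_AE_via_GenInv} then gives solvability if and only if $\widehat d\in\operatorname{ran}Q$, which is (b). It also gives all solutions $\theta=Q^g\widehat d+(I-Q^gQ)z$. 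Substituting back yields the stated formulas, and the case $|\gamma|=n_\gamma$ simply drops $x_3$.

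The main obstacle is the underdetermined part, which is needed for both directions ("any solution" and "if and only if"). I must confirm two things. First, every $C^1$ solution of \eqref{11x_under_eq} whose first block components equal $\widetilde\varphi$ has exactly this form with $\mu=x_-(0)$. Second, the representation is compatible with the block structure when some $\beta_k=1$. I would check this by a direct blockwise comparison with \eqref{x2_gen}, noting that the zero columns of $M_{\overline\beta}$ handle the $\beta_k=1$ blocks. The statement fixes $\widetilde\varphi$ and describes all solutions with that first-component choice, so the criterion is read for that $\widetilde\varphi$. The remaining regularity checks ($C^1$ of $x_1$ and $x_3$ from \textbf{(A1)}, \textbf{(A3)}) are routine.
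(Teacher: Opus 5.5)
Your argument is correct, and its decisive step coincides with the final paragraph of the paper's proof. There the boundary condition becomes $Qv=\widehat d$ in $v=(\lambda^\top,\mu^\top)^\top$, which is solved via Lemma~\ref{lemma_Solution_AE_via_GenInv} and pushed back through $P_0$, $P_2$. The difference lies in what comes before that step.

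The paper takes the splitting \eqref{BC_decomposed} with $d=\sum_i d_i$ at face value. It applies Theorems~\ref{th_ODE_part}--\ref{th_over_BVP} block by block to get $\widehat d_0\in\operatorname{ran}Q_0$, $\widehat d_1=0$, $\widehat d_2\in\operatorname{ran}Q_2$, $\widehat d_3=0$, and then sums these into $\widehat d\in\operatorname{ran}Q$. You use only the general solutions of the four differential subsystems and substitute them into the single coupled condition.

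Yours is the more rigorous route. As you note, \eqref{BC} does not determine the $d_i$, so the blockwise conditions are not individually necessary. The paper's summation step is justified only if one tacitly chooses $d_i:=B_ix_i(0)+C_ix_i(T)$ for necessity. For sufficiency one must choose $d_0:=Q_0\lambda+\bar d_0$, $d_2:=Q_2\mu+\bar d_2$, $d_1:=\bar d_1$, $d_3:=\bar d_3$, with $Q_0\lambda+Q_2\mu=\widehat d$. What the paper's organization buys is the direct reuse of the single-block results of Section~3.

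Your reading of the quantifier over $\widetilde\varphi$ (existence of a solution with these first block components) is also the right one. Read literally, the equivalence fails, because (b) depends on $\widetilde\varphi$ whereas mere solvability does not.

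One point that both you and the paper overlook concerns regularity. In the sufficiency direction, the first component of each underdetermined block of $x_2$ is $\varphi_k$ itself. So the constructed $x$ lies in $C^1([0,T],\mathbb{R}^n)$ only if $\widetilde\varphi\in C^1([0,T],\mathbb{R}^{n_\beta})$. Your ``routine'' regularity checks cover $x_1$ and $x_3$ but not $x_2$. For merely continuous $\widetilde\varphi$, the ``if'' direction yields a function that is not $C^1$, so the hypothesis on $\widetilde\varphi$ must be strengthened to $C^1$. In the necessity direction this is automatic, since $\widetilde\varphi$ is then read off a $C^1$ solution.
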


\begin{proof}
The transformation $x(t) = V\left(x_0^\top(t), x_1^\top(t), x_2^\top(t), x_3^\top(t)\right)^\top$ and left multiplication of equation \eqref{dae} by $W$ yield the decoupled system \eqref{111}. The boundary condition \eqref{BC} is transformed accordingly, resulting in the component-wise form \eqref{BC_decomposed}.

\begin{itemize}
    \item[(i)] \emph{The ODE part.} The subsystem for $x_0$ is an ordinary differential equation with boundary condition:
    \[
    \dot{x}_0(t) = A_0 x_0(t) + f_0(t), \quad B_0 x_0(0) + C_0 x_0(T) = d_0.
    \]
    We construct the matrix $Q_0 = B_0 + C_0 e^{A_0 T}$. By Theorem~\ref{th_ODE_part}, under assumption \textbf{(A0)}, this problem has a solution $x_0 \in C^1([0,T],\mathbb{R}^{n_0})$ if and only if
    \[
    \widehat{d}_0 := d_0 - \bar{d}_0  \in \text{ran } Q_0.
    \]
    In this case, the general solution is given by \eqref{sol_x0_gen}.

\item[(ii)] \emph{The nilpotent part.} For $\alpha \in \mathbb{N}^{n_{\alpha}}$, we consider the subsystem
\begin{equation}\label{alg_BVP_gen_proof}
    N_{\alpha} \dot{x}_1(t) = x_1(t) + f_1(t), \quad B_1 x_1(0) + C_1 x_1(T) = d_1.
\end{equation}
Due to the block-diagonal structure of $N_{\alpha}$, the functions $x_1$ and $f_1$ decompose into components $x_{\alpha_k}$ and $f_{\alpha_k}$ for $k=1,2,\ldots,n_\alpha$. Correspondingly, the boundary matrices and vector decompose as
\[
    B_1 = \big[B_{\alpha_1}, \ldots, B_{\alpha_{n_\alpha}}\big], \quad 
    C_1 = \big[C_{\alpha_1}, \ldots, C_{\alpha_{n_\alpha}}\big], \quad 
    d_1 = \sum_{k=1}^{n_\alpha} d_{\alpha_k}.
\]
Thus, the problem \eqref{alg_BVP_gen_proof} splits into $n_\alpha$ independent subproblems:
\[
    N_{\alpha_k} \dot{x}_{\alpha_k}(t) = x_{\alpha_k}(t) + f_{\alpha_k}(t), \quad 
    B_{\alpha_k} x_{\alpha_k}(0) + C_{\alpha_k} x_{\alpha_k}(T) = d_{\alpha_k}, 
\]
for $k = 1,\ldots,n_\alpha$. By Theorem~\ref{th_nilpotent}, under assumption \textbf{(A1)}, each subproblem admits a solution $x \in C^1([0,T],\mathbb{R}^\alpha)$ if and only if
\[
    \widehat{d}_{\alpha_k}:=d_{\alpha_k}+\sum_{i=0}^{\alpha_k - 1} \left[ B_{\alpha_k} N_{\alpha_k}^i f_{\alpha_k}^{(i)}(0) + C_{\alpha_k} N_{\alpha_k}^i f_{\alpha_k}^{(i)}(T) \right] = 0
\]
for $k = 1,\ldots,n_\alpha$.
In this case, the unique solution is given by
\[
    x_{\alpha_k}(t) = -\sum_{i=0}^{\alpha_k - 1} N_{\alpha_k}^i f_{\alpha_k}^{(i)}(t), \quad k = 1,\ldots,n_\alpha.
\]

Consequently, the solution of the full system \eqref{alg_BVP_gen_proof} is
\begin{equation}\label{sol_x1_genBVP}
    x_1(t) = -\sum_{i=0}^{\nu_\alpha - 1} N_{\alpha}^i f_1^{(i)}(t),
\end{equation}
and it exists if and only if
\begin{equation*}\label{cond_x1_alpha}
    \widehat{d}_1:=d_1-\bar{d}_1= 0.
\end{equation*}

\item[(iii)] \emph{The underdetermined part.}  
For \( \beta \in \mathbb{N}^{n_{\beta}} \), consider the boundary value problem
\begin{equation}\label{under_BVP_gen_proof}
   L_\beta \dot{x}_2(t) = K_\beta x_2(t) + f_2(t), \quad B_2 x_2(0) + C_2 x_2(T) = d_2,
\end{equation}
which, analogously to the previous case, decomposes into subproblems
\[\begin{aligned}
    L_{\beta_k} \dot{x}_{\beta_k}(t) = K_{\beta_k} x_{\beta_k}(t) +& f_{\beta_k}(t), \quad 
B_{\beta_k} x_{\beta_k}(0) + C_{\beta_k} x_{\beta_k}(T) = d_{\beta_k}, \\
&k = 1, \ldots, n_\beta.
\end{aligned}
\]
Let \( \varphi_k(t) \in C([0,T], \mathbb{R}) \) be an arbitrary function.

For subproblems with \( \beta_k > 1 \), we apply Theorem~\ref{th_under_part}. We define the matrices \( Q_{\beta_k} \), the functions \( h_{\beta_k} \), and the vectors \( \widehat{d}_{\beta_k} \) as in \eqref{Q2}, \eqref{h_beta}, and \eqref{d2}, respectively, 
where one replaces $\beta$ by $\beta_k$, $\varphi$ by $\varphi_k$,
$f_2$ by $f_{\beta_k}$, $B_2$ by $B_{\beta_k}$, $C_2$ by $C_{\beta_k}$, and $d_2$ by $d_{\beta_k}$. Then, under assumption \textbf{(A2)}, the subproblem admits a solution if and only if
\[
\widehat{d}_{\beta_k} \in \operatorname{ran} Q_{\beta_k}.
\]
In this case, any solution $x_{\beta_k} \in C^1([0,T],\mathbb{R}^{\beta_k})$ is given by
\begin{equation*}\label{sol_xbeta_k_proof}
x_{\beta_k}(t) = h_{\beta_k}(t; \varphi_k(t)) + e^{N_{\beta_k} t} L_{\beta_k}^\top \left[ Q_{\beta_k}^g \widehat{d}_{\beta_k} + \left(I_{\beta_k - 1} - Q_{\beta_k}^g Q_{\beta_k} \right) z_k \right],
\end{equation*}
where \( z_k \in \mathbb{R}^{\beta_k - 1} \) is an arbitrary vector.

In the case $\beta_k = 1$, the function $x_{\beta_k}(t)$ is not constrained by the differential equation and may be chosen freely: $x_{\beta_k}(t) = \varphi_k(t)$, subject only to the boundary condition
\begin{equation*}\label{BC_condition_beta1}
   \hat{\hat{d}}_{\beta_k} := d_{\beta_k} - B_{\beta_k} \varphi_k(0) - C_{\beta_k} \varphi_k(T) = 0.
\end{equation*}

To formulate the result for the full system \eqref{under_BVP_gen_proof}, we first assume that not all components satisfy $\beta_k=1$, i.e.\ $|\beta|-n_\beta>0$.  Let \( \beta_{k_1}, \ldots, \beta_{k_r} \) be the components of \( \beta \) such that \( \beta_{k_i} > 1 \), listed in the same order as they appear in \( \beta \). We define the block matrix
\[
Q_2 := \begin{bmatrix}
Q_{\beta_{k_1}}, \ldots,  Q_{\beta_{k_r}}
\end{bmatrix},
\]
where each \( Q_{\beta_{k_i}} \) is defined as in \eqref{Q2}:
\[
Q_{\beta_{k_i}} = \left( B_{\beta_{k_i}} + C_{\beta_{k_i}} e^{N_{\beta_{k_i}} T} \right) L_{\beta_{k_i}}^\top.
\]

Then all subproblems of \eqref{under_BVP_gen_proof} corresponding to indices with \(\beta_k>1\) are simultaneously solvable if and only if
\[
\sum_{i=1}^{r} \widehat{d}_{\beta_{k_i}}
= \sum_{i=1}^{r} \bigl( d_{\beta_{k_i}} - B_{\beta_{k_i}} e_{1,\beta_{k_i}} \varphi_{k_i}(0) - C_{\beta_{k_i}} h_{\beta_{k_i}}(T;\,\varphi_{k_i}(T)) \bigr) \in \operatorname{ran} Q_2.
\]

The remaining subproblems of \eqref{under_BVP_gen_proof} corresponding to indices with $\beta_k=1$ are simultaneously solvable if and only if the sum of the associated  $\hat{\hat{d}}_{\beta_k}$ is zero. We then define the vector
\begin{equation*}\label{hat_d_2}
\begin{aligned}
    \widehat{d}_2 
&:= \sum_{i=1}^{r} \widehat{d}_{\beta_{k_i}} +
 \sum_{j=1}^{n_\beta-r} \hat{\hat{d}}_{\beta_{k_j}}\\
&=\sum_{k=1}^{n_\beta} \big[ d_{\beta_k} - B_{\beta_k} e_{1,\beta_k} \varphi_k(0) - C_{\beta_k} h_{\beta_k}(T; \varphi_k(T)) \big]= d_2 - \bar{d}_2.
\end{aligned}
\end{equation*}
Hence, under assumption \textbf{(A2)}, the boundary value problem \eqref{under_BVP_gen_proof} admits a solution if and only if
\[
\widehat{d}_2 \in  \operatorname{ran} Q_2.
\]

It follows from the definition of $M_\beta$ and \eqref{h_beta}
that
any solution has the representation
\begin{equation}\label{sol_xbeta_proof}
    x_2(t) = \widetilde{h}_\beta(t; \widetilde{\varphi}(t)) + e^{N_\beta t} L_\beta^\top \left[ Q_2^g \widehat{d}_2 + (I_{|\beta| - n_\beta} - Q_2^g Q_2) z \right],
\end{equation}
where $z\in\mathbb{R}^{|\beta| - n_\beta}$ is an arbitrary vector and
 $\widetilde{\varphi}\in C([0,T],\mathbb{R}^{n_{\beta}})$ be an arbitrary function. 
\smallskip

\noindent
In the case when all $\beta_k=1$, i.e., $|\beta|=n_\beta$, 
the solution of problem \eqref{under_BVP_gen_proof} consists solely of arbitrary continuous components:
\begin{equation*}
    x_2(t)=\left(\varphi_1(t),\ldots,\varphi_{n_\beta}(t)\right)^\top=\widetilde{\varphi}(t),
\end{equation*}
which must only satisfy the boundary condition in \eqref{under_BVP_gen_proof}:
\begin{equation*}
    \widehat{d}_2:=d_2-B_2\widetilde{\varphi}(0)-C_2\widetilde{\varphi}(T)=0.
\end{equation*}

\item[(iv)] \emph{The overdetermined part.} For $\gamma \in \mathbb{N}^{n_{\gamma}}$, consider the subsystem
\begin{equation}\label{over_BVP_gen_proof}
L_{\gamma}^\top\dot{x}_3(t)=K_{\gamma}^\top x_3(t)+f_3(t),
\quad    B_3x_3(0)+C_3x_3(T)=d_3.
\end{equation}
Due to the block structure of the overdetermined part, the functions $x_3$ and $f_3$ decompose into components $x_{\gamma_k}$ and $f_{\gamma_k}$ for $k = 1, 2, \ldots, n_\gamma$, and the boundary condition matrices decompose accordingly. 
This leads to subproblems of the form
\begin{equation}\label{subproblem_over_proof}
    L_{\gamma_k}^\top \dot{x}_{\gamma_k}(t)=K_{\gamma_k}^\top x_{\gamma_k}(t) + f_{\gamma_k}(t) , \quad B_{\gamma_k} x_{\gamma_k}(0) + C_{\gamma_k} x_{\gamma_k}(T) = d_{\gamma_k}, 
\end{equation}
for $k = 1, \ldots, n_\gamma$. To subproblems with $\gamma_k > 1$ we apply Theorem~\ref{th_over_BVP}. Then, under assumption \textbf{(A3)}, the subproblem \eqref{subproblem_over_proof} has a solution if and only if the following conditions hold:
\begin{itemize}
        \item[($\text{a}'$)] $\sum\limits_{i=1}^{\gamma_k}f_{\gamma_k,i}^{(\gamma_k-i)}(t)=0$;
         \item[($\text{b}'$)]
    $\widehat{d}_{\gamma_k}:=d_{\gamma_k}+B_{\gamma_k} K_{\gamma_k}\sum\limits_{i=0}^{\gamma_k-1}N_{\gamma_k}^i f_{\gamma_k}^{(i)}(0)+C_{\gamma_k}K_{\gamma_k}\sum\limits_{i=0}^{\gamma_k-1}N_{\gamma_k}^i f_{\gamma_k}^{(i)}(T)=0.$
        \end{itemize}

In this case, the solution is uniquely given by
\[
    x_{\gamma_k}(t) = -K_{\gamma_k}\sum\limits_{i=0}^{\gamma_k-1}N_{\gamma_k}^i f_{\gamma_k}^{(i)}(t).
\]

For the components with $\gamma_k = 1$, the differential-algebraic equation reduces to the algebraic condition
\begin{equation*}\label{f_gamma1_cond}
    f_{\gamma_k}(t) = 0,
\end{equation*}
and there are no boundary conditions.

Let us now  aggregate the solutions across all components. If not all components of $\gamma$ are equal to 1, i.e., $|\gamma|>n_\gamma$, then the solution of the full subsystem \eqref{over_BVP_gen_proof} is given by
\begin{equation}\label{sol_x3_genBVP}
    x_3(t) = -K_{\gamma}\sum\limits_{i=0}^{\nu_\gamma-1}N_{\gamma}^i f_3^{(i)}(t),
\end{equation}
and this solution exists if and only if the following conditions hold:
\begin{itemize}
        \item[($\text{a}''$)] $\sum\limits_{i=1}^{\gamma_k}f_{\gamma_k,i}^{(\gamma_k-i)}(t)=0$, ~~~$k=1,\ldots,n_{\gamma}$;
         \item[($\text{b}''$)] $\widehat{d}_3:=d_3-\bar{d}_3=0.$
\end{itemize}
        
        In the case \( \gamma = (1, 1, \ldots, 1) \), i.e., \( |\gamma| = n_\gamma \), the system reduces to a set of algebraic constraints that must be satisfied:
\begin{equation*}
    f_{\gamma_k}(t) = 0, \quad k = 1, \ldots, n_\gamma.
\end{equation*}
\end{itemize}

We now combine the individual solvability conditions to formulate a unified criterion for the original boundary value problem \eqref{dae}, \eqref{BC}. We assume that conditions \textnormal{\textbf{(A0)}--\textbf{(A3)}} hold, and fix an arbitrary function $\widetilde{\varphi}(t) \in C([0,T], \mathbb{R}^{n_\beta})$.

The solvability conditions established in parts~(i)–(iv) impose simultaneous requirements on the components $f_i(t)$, $i=0,1,2,3$, of the transformed inhomogeneity $Wf(t)$, as well as on the vectors $\widehat{d}_i$.
The solvability conditions for the four subsystems are summarized as follows:
\begin{itemize}
    \item[(i)] \( \widehat{d}_0 \in  \operatorname{ran} Q_0\),
    \item[(ii)] \( \widehat{d}_1 = 0 \),
    \item[(iii)] \( \widehat{d}_2 \in  \operatorname{ran } Q_2 \) if \( |\beta| > n_\beta \); \quad \( \widehat{d}_2 = 0 \) if \( |\beta| = n_\beta \),
    \item[(iv.1)] \( \widehat{d}_3 = 0 \) if \( |\gamma| > n_\gamma \), 
    \item[(iv.2)] \( \sum\limits_{i=1}^{\gamma_k} \left( f_{\gamma_k,i}(t) \right)^{(\gamma_k - i)} = 0 \), \quad \( k = 1, \ldots, n_\gamma \).
\end{itemize}

Noting that \(\sum_{i=0}^{3}\widehat d_i=\widehat d\), conditions (i)–(iii) together with (iv.1) yield, for the full system,
\[
\widehat d \in \operatorname{ran}\,[\,Q_0\ \ Q_2\,],
\]
which, together with (iv.2), determines the solvability criterion for the general boundary-value problem \eqref{dae}, \eqref{BC}, as stated in conditions~(a) and~(b) of the theorem.

Since $\widehat{d}\in\operatorname{ran}Q$, the linear system $Qv=\widehat{d}$, where $v=\left(\lambda_0^\top,\mu_2^\top\right)^\top\in\mathbb{R}^{n_0+|\beta|-n_\beta}$, is consistent. Its general solution is
\[
v = Q^g\widehat{d} + (I_{n_0+|\beta|-n_\beta} - Q^gQ)\,z, \qquad z \in \mathbb{R}^{n_0+|\beta|-n_\beta},
\]
where $Q^g$ is any generalized inverse of $Q$. Applying $P_0$ and $P_2$ respectively yields the initial values
\[
\lambda_0 = x_0(0) = P_0 v, \qquad \mu_2 = P_2 v,
\]
which, substituted into the variation-of-constants formula for the ODE part and the formula \eqref{x2_gen} for the underdetermined part, give the expressions for $x_0(t)$ and $x_2(t)$ stated in the theorem. Finally, assembling these solutions and \eqref{sol_x1_genBVP}, \eqref{sol_x3_genBVP}, obtained for the respective subproblems, yields the general solution of the boundary value problem in the form \eqref{SOLUTION}. The representations of the components $x_0$ and $x_2$ of the general solution do not depend on the choice of a generalized inverse $Q^g$; see Lemma \ref{lemma_Solution_AE_via_GenInv}. This completes the proof of Theorem~\ref{Th1}.

\end{proof}

\subsection{Case II: $n_0+|\beta| - n_{\beta} = 0$}

In this case, we have \( n_0 = 0 \), which implies that the ODE part is absent in the decomposition \eqref{KCF_1}, and \( |\beta| = n_\beta \). The latter condition occurs in one of two situations: either the underdetermined part is not present in \eqref{KCF_1}, or it is present with all components of the multiindex \( \beta \) equal to 1.

The proof of the following theorem proceeds along the same lines as that of Theorem~\ref{Th1}, with the only difference being that the ODE block is absent and, if the underdetermined part  is present,  it consists solely of components with $\beta_k=1$.

\begin{theorem}\label{Th1_second_case}
Let \( W \) and \( V \) be invertible matrices that transform the matrix pair \( (E, A) \) in \eqref{dae} into the quasi-Kronecker canonical form \eqref{KCF_1} with 
\( n_0 + |\beta| - n_\beta = 0 \).
Suppose that assumptions \textnormal{\textbf{(A1)}--\textbf{(A3)}} are satisfied. 

Then, for any \( \widetilde{\varphi} \in C([0,T],\mathbb{R}^{n_{\beta}}) \), the boundary value problem \eqref{dae}, \eqref{BC} admits a solution \( x \in C^1([0,T],\mathbb{R}^n) \) if and only if the following conditions hold:
\begin{itemize}
    \item[\emph{(a)}] 
        \(
        \sum\limits_{i=1}^{\gamma_k} f_{\gamma_k,i}^{(\gamma_k - i)}(t) = 0,\quad  k = 1,\ldots,n_{\gamma};
        \)
    \item[\emph{(b)}] 
    $
    \widehat{d}_1 =\widehat{d}_2 = \widehat{d}_3 = 0.
    $
\end{itemize}

Any solution \( x \) is given by
\begin{equation}\label{SOLUTION_all_betas_1}
    x(t) = V
\begin{bmatrix}
 -\sum\limits_{i=0}^{\nu_{\alpha} - 1} N_{\alpha}^i f_1^{(i)}(t) \\
 \widetilde{\varphi}(t) \\
 -K_{\gamma} \sum\limits_{i=0}^{\nu_{\gamma} - 1} N_{\gamma}^i f_3^{(i)}(t)
\end{bmatrix}.
\end{equation}

If \( n_\alpha =0 \), then the first component is omitted from \eqref{SOLUTION_all_betas_1}.

If \( n_{\beta} = 0 \), the second component is omitted.

If \( |\gamma| = n_{\gamma} \), the third component is omitted.
\end{theorem}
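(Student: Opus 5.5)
Since $n_0=0$ and $|\beta|=n_\beta$, the plan is to repeat the decoupling argument from the proof of Theorem~\ref{Th1} with the ODE block removed and only trivial ($\beta_k=1$) underdetermined blocks left. First, transform by $x(t)=V(x_1^\top,x_2^\top,x_3^\top)^\top$ and multiply \eqref{dae} by $W$. This yields the three subsystems \eqref{11x_alg_eq}, \eqref{11x_under_eq}, \eqref{11x_over_eq} together with the split boundary conditions \eqref{BC_decomposed} for $i=1,2,3$. Because $W$ and $V$ are invertible, $x\in C^1([0,T],\mathbb{R}^n)$ solves \eqref{dae}, \eqref{BC} if and only if each component $x_i$ solves its own subproblem.

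Each block is then treated separately, exactly as in parts (ii)--(iv) of the proof of Theorem~\ref{Th1}.

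For the nilpotent part, I split into the blocks $\alpha_k$ and apply Theorem~\ref{th_nilpotent} under \textbf{(A1)}. This gives the unique candidate $x_1=-\sum_{i=0}^{\nu_\alpha-1}N_\alpha^i f_1^{(i)}$, and it is a solution if and only if $\widehat d_1=d_1-\bar d_1=0$. Here I use that $N_{\alpha_k}^i=0$ for $i\ge\alpha_k$, so the block sums collapse into a single sum up to $\nu_\alpha-1$.

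For the underdetermined part, every $\beta_k=1$, so the equation $L_\beta\dot x_2=K_\beta x_2+f_2$ is empty because $L_\beta,K_\beta$ have zero rows. Hence $x_2=\widetilde\varphi$ is arbitrary, and the only requirement is $\widehat d_2=d_2-B_2\widetilde\varphi(0)-C_2\widetilde\varphi(T)=0$.

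For the overdetermined part, I apply Theorem~\ref{th_over_BVP} blockwise under \textbf{(A3)} when $\gamma_k>1$. For $\gamma_k=1$ I use the remark that the equation reduces to $f_{\gamma_k}=0$, which is exactly condition (a) with $\gamma_k=1$. Aggregating gives condition (a) together with $\widehat d_3=0$ (vacuous if $|\gamma|=n_\gamma$), and the unique $x_3$ of \eqref{sol_x3_genBVP}.

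The main point needing care is the bookkeeping of the boundary data. The split $d=d_1+d_2+d_3$ in \eqref{BC_decomposed} is not intrinsic. Only the combined condition $\sum_i(B_ix_i(0)+C_ix_i(T))=d$ is actually imposed, so the honest criterion is $\widehat d_1+\widehat d_2+\widehat d_3=0$, i.e.\ $\widehat d=0$. I would handle this as in Case I, where $\sum_i\widehat d_i=\widehat d$: argue that there is no free parameter vector here ($Q$ has zero columns, so $\operatorname{ran}Q=\{0\}$), and then condition (b) is read as requiring $\widehat d=0$ once the $d_i$ are chosen consistently with the given splitting. Equivalently, (b) is the Case I condition (b) with $Q$ empty.

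Finally, I assemble $x=V(x_1^\top,x_2^\top,x_3^\top)^\top$, which gives \eqref{SOLUTION_all_betas_1}, and I drop the components whose blocks are absent ($n_\alpha=0$, $n_\beta=0$, or $|\gamma|=n_\gamma$). For the converse direction, I check that when (a) and (b) hold, this assembled $x$ is $C^1$ under \textbf{(A1)}--\textbf{(A3)}. For $x_2=\widetilde\varphi$ this needs $\widetilde\varphi\in C^1$, and I would mention this requirement explicitly, since the statement only assumes $\widetilde\varphi$ continuous.
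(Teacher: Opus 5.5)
Your proposal is correct and follows the paper's route. The paper's proof only says to repeat the proof of Theorem~\ref{Th1} with the ODE block removed and all $\beta_k=1$, which is exactly your blockwise use of Theorems~\ref{th_nilpotent} and~\ref{th_over_BVP} together with the free component $x_2=\widetilde{\varphi}$.

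Both caveats you raise are genuine imprecisions in the statement that the paper does not address. First, for a fixed splitting $d=d_1+d_2+d_3$, condition (b) is sufficient but not necessary. The intrinsic criterion is $\widehat{d}=0$, equivalently (b) for a suitably chosen splitting, which is what the Case~I aggregation yields with $Q$ empty. Second, $\widetilde{\varphi}$ must lie in $C^1$ for $x$ to be a $C^1$ solution.
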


\subsection{Uniqueness theorem for the boundary value problem \eqref{dae},\eqref{BC}}

\begin{theorem}\label{Theorem_GenBVP_UniqueSolution}
    Let $W$ and $V$ be invertible matrices which transform the matrix pair $(E,A)$ in \eqref{dae} to quasi-Kronecker form \eqref{KCF_1}. Suppose that the assumptions
    \textnormal{\textbf{(A0)}}, \textnormal{\textbf{(A1)}}, and \textnormal{\textbf{(A3)}}
    are satisfied.

        Then, problem \eqref{dae}, \eqref{BC} has a unique solution $x\in C^1([0,T],\mathbb{R}^n)$ if and only if:
         \begin{itemize}
        \item[\emph{(a)}] the block $(L_{\beta},K_{\beta})$ is not present in the matrix pair $(WEV, WAV)$;
        \item[\emph{(b)}] $\sum\limits_{i=1}^{\gamma_k}\left(f_{\gamma_k,i}(t)\right)^{(\gamma_k-i)}=0$, $~~k=1,\ldots,n_{\gamma}$;
         \item[\emph{(c)}] $\widehat{d}\in \textnormal{ran}~ Q$, where $Q$ and $\widehat{d}$ are from \eqref{Q} and \eqref{d_hat};
         \item[\emph{(d)}] $\emph{rank}~ Q=n_0$.
        \end{itemize}

        The unique solution is given by
        \begin{equation}\label{UNIQUE_SOLUTION}
            x(t)=V\begin{bmatrix}
        e^{A_0t}(Q^\top Q)^{-1}Q^\top\widehat{d}_0+\int\limits_0^t e^{A_0(t-s)}f_0(s)ds\\
        -\sum\limits_{i=0}^{\nu_{\alpha}-1}N_{\alpha}^i f_1^{(i)}(t)\\
        -K_{\gamma}\sum\limits_{i=0}^{\nu_{\gamma}-1}N_{\gamma}^i f_3^{(i)}(t)
         \end{bmatrix}.
        \end{equation}

If \( |\gamma| = n_{\gamma} \), then the third component is omitted in \eqref{UNIQUE_SOLUTION}.
\end{theorem}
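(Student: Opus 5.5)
The plan is to reduce the uniqueness question to the block-wise description of all solutions obtained in Theorems~\ref{Th1} and~\ref{Th1_second_case}. After the transformation \eqref{x_trans}, solutions $x$ of \eqref{dae}, \eqref{BC} correspond bijectively, via the invertible matrix $V$, to tuples $(x_0,x_1,x_2,x_3)$ solving the decoupled problems \eqref{111}, \eqref{BC_decomposed}. By Theorems~\ref{th_nilpotent} and~\ref{th_over_BVP}, whenever they are solvable the nilpotent component $x_1$ and the overdetermined component $x_3$ are uniquely determined by $f_1$ and $f_3$. Hence uniqueness can only fail through the freedom in $x_0$ and $x_2$: the function $\widetilde{\varphi}$ and the vector $z$ in \eqref{SOLUTION}.

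\emph{Necessity.} Assume the problem has a unique solution. Existence together with Theorem~\ref{Th1} (or Theorem~\ref{Th1_second_case} when $n_0+|\beta|-n_\beta=0$) yields (b) and (c).

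For (a), suppose $n_\beta>0$ and take a block $\beta_k$. The first component $x_{\beta_k,1}=\varphi_k$ may be perturbed by any $\psi\in C^1([0,T],\mathbb{R})$ with $\psi(0)=\psi(T)=0$, $\psi\not\equiv 0$, chosen with vanishing moments so that the induced change of $x_{\beta_k}(T)$ is zero. Concretely, with $\mu$ fixed, $x_-(T)$ depends on $\varphi$ through $\int_0^T e^{N(T-s)}e_1\varphi(s)\,ds$, whose entries are the moments $\int_0^T (T-s)^j\varphi(s)\,ds$ for $j<\beta_k-1$. Requiring $\psi(0)=\psi(T)=0$ and vanishing of these finitely many moments leaves infinitely many nonzero choices of $\psi$. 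For such $\psi$ both $x_{\beta_k}(0)$ and $x_{\beta_k}(T)$ are unchanged, so the boundary condition still holds while the solution changes, contradicting uniqueness. (For $\beta_k=1$ one simply uses $\psi$ vanishing at the endpoints.) This proves (a).

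With (a) established, $Q=B_0+C_0e^{A_0T}=Q_0$, and the solution set is the affine family parameterized by $x_0(0)\in Q^g\widehat{d}+\ker Q$. Since $x_0(0)\mapsto x_0$ is injective by \eqref{x0_via_lambda}, uniqueness forces $\ker Q=\{0\}$, which is (d).

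\emph{Sufficiency.} Under (a)--(d), Theorem~\ref{Th1} with $n_\beta=0$ (or Theorem~\ref{Th1_second_case} if $n_0=0$, where (d) is vacuous) gives existence. The only free parameter is $z$, entering through $(I-Q^gQ)z$. Because $Q$ has full column rank, Corollary~\ref{corollary_ODEpart_uniqueness} lets us take $Q^g=(Q^\top Q)^{-1}Q^\top$, a left inverse, so $I-Q^gQ=0$ and the solution is unique. It is given by \eqref{UNIQUE_SOLUTION}; note that $\widehat{d}$ there reduces to $\widehat{d}_0$ once conditions (ii) and (iv.1) from the proof of Theorem~\ref{Th1} force $\widehat{d}_1=\widehat{d}_3=0$. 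Finally, if $|\gamma|=n_\gamma$ the third component is omitted as before.

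The main obstacle is the necessity of (a). The theorem must genuinely exhibit a second solution, and the formulation ``for any $\widetilde{\varphi}$'' hides the fact that different $\varphi$ change the right-hand side $\widehat{d}_2$. The perturbation argument above, which keeps both boundary values fixed, is the step that needs the careful moment computation.
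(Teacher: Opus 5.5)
Your proposal is correct and follows the same route as the paper. You reduce to the block-wise description from Theorem~\ref{Th1}, rule out the $(L_\beta,K_\beta)$ block, and, with $Q=Q_0$, get uniqueness from Corollary~\ref{corollary_ODEpart_uniqueness} via the left inverse $(Q^\top Q)^{-1}Q^\top$. You go beyond the paper in the necessity of (a): the paper only asserts that the arbitrary component $\widetilde{\varphi}$ ``must vanish,'' whereas your perturbation $\psi\in C^1$, with $\psi(0)=\psi(T)=0$ and vanishing moments $\int_0^T (T-s)^j\psi(s)\,ds$, leaves $x_{\beta_k}(0)$ and $x_{\beta_k}(T)$ unchanged and so actually produces a second solution. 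You also handle the case $n_0=n_\beta=0$ through Theorem~\ref{Th1_second_case}, which the paper's proof leaves implicit.
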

\begin{proof}
Let conditions~(b) and~(c) of the theorem be satisfied. Then, by Theorem~\ref{Th1}, the boundary value problem \eqref{dae}, \eqref{BC} admits a solution of the form \eqref{SOLUTION}. The third component of this solution involves an arbitrary function $\widetilde{\varphi}(t)$. In order for the problem \eqref{dae}, \eqref{BC} to have a unique solution, this arbitrary component must vanish. This implies that the quasi-Kronecker form of the matrix pair $(E, A)$ must not contain a block of the form $(L_{\beta}, K_{\beta})$.

In this case, the matrix $Q$ reduces to $Q_0$, and the subproblem corresponding to the ODE part must have a unique solution. By Corollary~\ref{corollary_ODEpart_uniqueness}, this holds if and only if $\operatorname{rank} Q = n_0$. Using the representation \eqref{sol_x0_unique} for the first component and removing the third component from the general solution \eqref{SOLUTION}, we conclude that the unique solution of the problem \eqref{dae}, \eqref{BC} is given by \eqref{UNIQUE_SOLUTION}.

\end{proof}

\begin{corollary}\label{Corrolary}
Let $W$ and $V$ be invertible matrices which transform a regular matrix pair $(E,A)$ into the quasi-Weierstrass form \eqref{WCF_1}. Suppose that assumptions \textnormal{\textbf{(A0)}} and \textnormal{\textbf{(A1)}} are fulfilled. Furthermore, let $l=n_0$.

Then problem \eqref{dae}, \eqref{BC} is uniquely solvable if and only if the matrix $Q_0=B_0+C_0 e^{A_0 T}$ is invertible.
The unique solution is given by
\begin{equation}\label{Corollary_solution}
 x(t)=V\begin{bmatrix}
     e^{A_0t}Q_0^{-1}d_*+\int\limits_0^t e^{A_0(t-s)}f_0(s)ds\\
     -\sum\limits_{i=0}^{\nu_{\alpha}-1}N_{\alpha}^i f_1^{(i)}(t)
 \end{bmatrix},
\end{equation}
where
\begin{equation}\label{d*_hat}
d_*=
d-C_0\int\limits_0^T e^{A_0(T-s)}f_0(s)ds+\sum\limits_{i=0}^{\nu_{\alpha}-1}\left(B_1 N_{\alpha}^i f_1^{(i)}(0)+C_1N_{\alpha}^i f_1^{(i)}(T)\right).
\end{equation}
\end{corollary}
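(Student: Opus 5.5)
The plan is to specialize Theorem~\ref{Theorem_GenBVP_UniqueSolution} to the regular case. Since $(E,A)$ is regular, its quasi-Kronecker form reduces to the quasi-Weierstrass form \eqref{WCF_1}. Hence the blocks $(L_\beta,K_\beta)$ and $(L_\gamma^\top,K_\gamma^\top)$ are absent, i.e.\ $n_\beta=n_\gamma=0$, and assumption \textbf{(A3)} is vacuous. Consequently condition (a) of Theorem~\ref{Theorem_GenBVP_UniqueSolution} holds automatically, and condition (b) is an empty statement. By \eqref{Q}, with $|\beta|=n_\beta=0$, the matrix $Q$ equals $Q_0=B_0+C_0e^{A_0T}$. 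In \eqref{d_hat} the terms $\bar d_2$ and $\bar d_3$ vanish, so $\widehat d=d-\bar d_0-\bar d_1$. Comparing with the definitions of $\bar d_0,\bar d_1$, this is exactly $d_*$ from \eqref{d*_hat}.

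Thus Theorem~\ref{Theorem_GenBVP_UniqueSolution} states that the problem is uniquely solvable if and only if $d_*\in\operatorname{ran}Q_0$ and $\operatorname{rank}Q_0=n_0$. The key observation uses $l=n_0$, which makes $Q_0$ square. Then $\operatorname{rank}Q_0=n_0$ is equivalent to invertibility of $Q_0$, and invertibility gives $\operatorname{ran}Q_0=\mathbb{R}^{l}$, so condition (c) follows from (d). Hence unique solvability holds if and only if $Q_0$ is invertible. For the converse direction, unique solvability yields (d) through the theorem, and (d) for a square matrix means invertibility.

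For the formula, I would substitute $(Q^\top Q)^{-1}Q^\top=Q_0^{-1}$ into \eqref{UNIQUE_SOLUTION} (valid since $Q_0$ is invertible) and drop the third component. The vector multiplying $e^{A_0t}Q_0^{-1}$ should be the full $\widehat d=d_*$, since in the proof of Theorem~\ref{Th1} the parameter solves $Qv=\widehat d$. The remaining point is the bookkeeping in the nilpotent part. Separately, that part requires $\widehat d_1=0$, but in the combined criterion only the sum $\widehat d$ enters. With $Q_0$ invertible, $Q_0\lambda=d_*$ always has the unique solution $\lambda=Q_0^{-1}d_*$. This yields \eqref{Corollary_solution}.

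The main obstacle is to reconcile the notation $\widehat d_0$ in \eqref{UNIQUE_SOLUTION} with $\widehat d=d_*$. I would handle it by re-deriving the parameter equation directly: insert $x_0(t)=e^{A_0t}\lambda+\int_0^te^{A_0(t-s)}f_0(s)\,ds$ and $x_1$ from \eqref{sol_x1_genBVP} into $Bx(0)+Cx(T)=d$. This gives $Q_0\lambda=d_*$ verbatim, and from it both the equivalence and the solution formula follow.
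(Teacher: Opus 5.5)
Your proposal is correct and follows the paper's own route. You specialize Theorem~\ref{Theorem_GenBVP_UniqueSolution} to the regular case, where $Q=Q_0$ is square because $l=n_0$, so that condition (d) amounts to invertibility and condition (c) then holds automatically. Your direct check that the boundary condition reduces to $Q_0\lambda=d_*$ is a worthwhile addition the paper's one-line proof omits: it justifies using the full $\widehat d=d_*$ in place of the $\widehat d_0$ written in \eqref{UNIQUE_SOLUTION}.
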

\begin{proof} The result follows immediately from Theorem \ref{Theorem_GenBVP_UniqueSolution} if we take into account that the quasi-Weierstrass form for $(E,A)$ contains only the diagonal blocks $(I_{n_0}, A_0)$ and $(N_{\alpha}, I_{|\alpha|})$, and the matrix $Q_0$ is square (by assumption, $l=n_0$).
\end{proof}

\begin{remark}

Assume that the boundary condition $Bx(0)=Bx(T)$ holds. This means
$B=-C$ and thus $B_0=-C_0$. If the
matrix pair $(E,A)$ is in the quasi-Weierstrass form \eqref{WCF_1}, one finds
that $\det Q=\det B_0 \det(I-e^{A_0 T})$. By the spectral theorem, 
$\det(I-e^{A_0 T})=0$ if and only if  $0$ belongs to the spectrum
of $A_0$, that is $\det A_0=0$.  If  $\det A_0\not=0$, the
problem \eqref{dae}, \eqref{BC} is uniquely solvable if and only if the matrix
$B_0$ is invertible, and then the boundary condition $Bx(0)=Bx(T)$ reduces to
$x(0)=x(T)$.

If $(E,A)$ is a purely singular pencil without any regular part,
then $Q=(B_2+C_2 e^{N_{\beta} T})L_\beta^\top$. In that case 
$$
QQ^T=(B_2+C_2 e^{N_{\beta} T})L_\beta^\top L_\beta(B_2^T+e^{N_{\beta}^T T})C_2^T
$$ is not invertible, since $\det (L_\beta^\top L_\beta)=0$. On the other hand,
if the
matrix pair $(E,A)$ is in the quasi-Kronecker form with the the boundary condition $x(0)=x(T)$ such that $\det A_0\not=0$, then $QQ^T$ is invertible since it is positive definite as
$QQ^T\ge Q_0Q_0^T>0$.
\end {remark}

\section{Examples}

\begin{example}
     Consider the differential algebraic equation
  \begin{equation}\label{Ex_reg_syst}
  \begin{aligned}
     \dot{x}_1(t)+x_3(t)&=0,\\
     \dot{x}_2(t)+x_3(t)&=0,\\
     x_2(t)-\sin t&=0,
     \end{aligned}
\end{equation}
subject to the boundary condition
\begin{equation}\label{Ex_reg_BC}
    Bx(0)+Cx(1)=d
\end{equation}
with $B=(b_1,~b_2,~b_3), C=(c_1,~c_2,~c_3)\in\mathbb{R}^{1\times 3}$ and $d\in\mathbb{R}$.
\vskip0.3cm

Here 
\begin{equation*}
    E=\begin{bmatrix}
        1 & 0 & 0\\ 0 & 1 & 0 \\ 0 & 0 & 0
    \end{bmatrix},\quad A=\begin{bmatrix}
        0 & 0 & -1\\ 0 & 0 & -1 \\ 0 & -1 & 0
    \end{bmatrix}, \quad f(t)=\begin{bmatrix}
        0 \\ 0 \\ \sin t
    \end{bmatrix}.
\end{equation*}
    The matrix pair
$(E,A)$ in \eqref{Ex_reg_syst} is regular. The matrices 
\begin{equation*}
    W=\begin{bmatrix}
        1 & -1 & 0\\ 0 & 0 & -1 \\ 0 & 1 & 0
    \end{bmatrix}~~ \text{and} ~~V=\begin{bmatrix}
        1 & 1 & 0\\ 0 & 1 & 0 \\ 0 & 0 & -1
    \end{bmatrix} 
\end{equation*}
transform $(E,A)$ to Weierstrass canonical form
\begin{equation*}
    (WEV,WAV)=(\text{diag}(I_1, N_2),\text{diag}(A_0, I_2))= \left(\left[\begin{array}{c|c c}
        1 & 0 & 0\\ \hline 0 & 0 & 0 \\ 0 & 1 & 0
    \end{array}\right], \left[\begin{array}{c|c c}
        0 & 0 & 0\\\hline 0 & 1 & 0 \\ 0 & 0 & 1
    \end{array} \right]\right).
\end{equation*}

The performed transformation yields
\begin{equation*}
    Wf(t)=\begin{bmatrix}
        f_0(t)\\ f_1(t)
    \end{bmatrix} ~~\text{with} ~~~f_0(t)=0,~ f_1(t)=\begin{bmatrix}
        -\sin t\\ 0 \end{bmatrix},
\end{equation*}
\begin{equation*}
    BV=(B_0,~B_1)~~\text{with} ~~~B_0=(b_1), B_1=(b_1+b_2,~-b_3),
\end{equation*}
\begin{equation*}
    CV=(C_0,~C_1)~~\text{with} ~~~C_0=(c_1), C_1=(c_1+c_2,~-c_3).
\end{equation*}

The matrix $Q_0=B_0+C_0 e^{A_0 T}=b_1+c_1$ has an inverse if $b_1+c_1\neq 0$.
Hence, by Corollary \ref{Corrolary}, the boundary value problem \eqref{Ex_reg_syst}, \eqref{Ex_reg_BC} has a unique solution $x$ if and only if $b_1+c_1\neq 0$. By using \eqref{Corollary_solution}, \eqref{d*_hat} we obtain the unique solution of problem \eqref{Ex_reg_syst}, \eqref{Ex_reg_BC}
\begin{equation*}
    x(t)=V\begin{bmatrix}
        Q_0^{-1}d_*\\-\sum\limits_{i=0}^{1}N_2^i f_1^{(i)}(t)
    \end{bmatrix}=\begin{bmatrix}
        \rho+\sin t\\\sin t\\-\cos t
    \end{bmatrix},
\end{equation*}
where $\rho=(b_1+c_1)^{-1}(d+b_3-(c_1+c_2)\sin 1 +c_3\cos 1)$.
\end{example}
\begin{example} Consider differential-algebraic equation \eqref{dae} with
overdetermined parts, that is
\begin{equation*}
    E=\text{diag}(I_1,L_{(2,1)}^\top)=\left[\begin{array}{c|c}
        1 & 0\\ \hline0 & 0\\ 0 & 1\\0 & 0
    \end{array}\right], A=\text{diag}(A_0,K_{(2,1)}^\top)=\left[\begin{array}{c|c}
        0 & 0\\ \hline0 & 1\\ 0 & 0\\0 & 0
    \end{array}\right], f(t)=\begin{bmatrix}
        3t^2\\ \sin t \\ -\cos t\\0
        \end{bmatrix},
\end{equation*}
which is subject to the boundary condition
\begin{equation}\label{Ex_sing_BC}
    \begin{bmatrix}
        2 & 0\\ 0 & -1
    \end{bmatrix}x(0)+\begin{bmatrix}
        1 & 0\\ 1 & 0
    \end{bmatrix}x(1)=\begin{bmatrix}
        4\\ 2
    \end{bmatrix}.
\end{equation}

We apply Theorem \ref{Theorem_GenBVP_UniqueSolution} to problem \eqref{dae}, \eqref{Ex_sing_BC}. The matrix pair $(E,A)$ is already in Kronecker canonical form. Hence, condition (a) of Theorem \ref{Theorem_GenBVP_UniqueSolution} is met.

For the first block with $n_0=1$, $A_0=0$, we have an ordinary differential equation  \[\dot{x}_1(t)=3t^2.\] The second block of size $3\times 1$ corresponds to the overdetermined part. We have $\gamma=(2,1)$, $|\gamma|=3$, $n_{\gamma}=2$, $\nu_{\gamma}=2$. For problem \eqref{dae}, \eqref{Ex_sing_BC} we have
\begin{equation*}
    B_0=\begin{bmatrix}
        2\\0
    \end{bmatrix}, ~ B_3=\begin{bmatrix}
        0\\-1
    \end{bmatrix}, ~ C_0=\begin{bmatrix}
        1\\1
    \end{bmatrix}, ~ C_3=\begin{bmatrix}
       0\\0
    \end{bmatrix},
\end{equation*}
\begin{equation*}
    f_0(t)=3t^2, ~~f_3(t)=\begin{bmatrix}
        f_{\gamma_1}(t) \\ f_{\gamma_2}(t)
    \end{bmatrix}=\begin{bmatrix}
        \begin{pmatrix}
            \sin t \\ -\cos t
        \end{pmatrix}\\ 0
    \end{bmatrix}.
\end{equation*}

Let us verify condition (b) of Theorem \ref{Theorem_GenBVP_UniqueSolution}. Indeed, we have
\begin{equation*}\begin{array}{ll}
    k=1, \gamma_1=2: &  \sum\limits_{i=1}^2 f_{\gamma_1,i}^{(2-i)}(t)=\left(f_{\gamma_1,1}(t)\right)^\prime+f_{\gamma_1,2}(t)=\left(\sin t\right)^\prime-\cos t=0;\\
     k=2, \gamma_2=1: & \sum\limits_{i=1}^1f_{\gamma_2,i}^{(1-i)}(t)=f_{\gamma_2,1}(t)=0.
\end{array}
\end{equation*}

To check conditions (c) and (d) of Theorem \ref{Theorem_GenBVP_UniqueSolution}, we begin with calculating the matrix $Q$ and the vector $\widehat{d}$:
\begin{equation*}
    Q=B_0+C_0 e^{A_0 T}=\begin{bmatrix}
        2\\0
    \end{bmatrix}+\begin{bmatrix}
        1\\1
    \end{bmatrix}e^{0}=\begin{bmatrix}
        3\\1
    \end{bmatrix}
\end{equation*}
and condition (d) of Theorem \ref{Theorem_GenBVP_UniqueSolution} is fulfilled.
Moreover,
\begin{equation*}\begin{aligned}
    \widehat{d}&=d-C_0\int\limits_0^1 e^{A_0(1-s)}f_0(s)ds\\&+\sum\limits_{i=0}^1\left(B_3K_{(2,1)}N_{(2,1)}^i f_3^{(i)}(0)+C_3K_{(2,1)}N_{(2,1)}^i f_3^{(i)}(T)\right)=\begin{bmatrix}
        3\\1
    \end{bmatrix},
\end{aligned}
   \end{equation*}
 which shows (c) of Theorem \ref{Theorem_GenBVP_UniqueSolution}.
 Hence, problem \eqref{dae}, \eqref{Ex_sing_BC} has a unique solution $x$, which we obtain from \eqref{UNIQUE_SOLUTION}
\begin{equation*}
    x(t)=\begin{bmatrix}
        (Q^\top Q)^{-1}Q^\top\widehat{d}+\int\limits_0^t 3s^2 ds\\
        -K_{(2,1)}\sum\limits_{i=0}^1 N_{(2,1)}^i \begin{bmatrix}
            \sin t\\-\cos t\\0
        \end{bmatrix}^{(i)}
         \end{bmatrix}=\begin{bmatrix}
             t^3+1\\-\sin t
         \end{bmatrix}.
\end{equation*}

\end{example}

\begin{example}
    Consider the boundary value problem
    \begin{equation}\label{Example_under}
    L_{(3,1,2)}\dot{x}(t)=K_{(3,1,2)} x(t)+\left(
        12t^2, 1, 2t\right)^\top,
\end{equation}
\begin{equation}\label{Example_under_BC}
   B_2x(0)+C_2x(1)=\begin{bmatrix}
        1 \\  1
    \end{bmatrix}
\end{equation}
with
\begin{equation*}
   B_2=\begin{bmatrix}
        0 & 0 & 0 & 1 & 0 & 0\\ 1 & 0 & -1 & 0 & 0 & 1
    \end{bmatrix},\quad C_2=\begin{bmatrix}
        0 & 0 & 0 & 0 & 0 & 1\\ 0 & 0 & 0 & 0 & -1 & 0
    \end{bmatrix}.
\end{equation*}

The differential-algebraic equation \eqref{Example_under} is already in its Kronecker canonical form, consisting solely of the underdetermined block, corresponding to the multi-index $\beta=(3,1,2)$.

Hence, given an arbitrary function $\widetilde{\varphi}=\left(\varphi_1,\varphi_2,\varphi_3\right)^\top\in C([0,1],\mathbb{R}^3)$, we need to verify condition (b) of Theorem \ref{Th1}. To this end, we compute
$Q$ and $\widehat{d}$:
\begin{equation*}\label{Q_ex_UNDET}
    Q=\begin{bmatrix} B_2+C_2e^{N_{\beta}}
\end{bmatrix}L_{\beta}^\top=\begin{bmatrix}
            0&0&1\\0&-1&1
        \end{bmatrix},
\end{equation*}
\begin{equation*}\label{EX-d-Undet}
        \widehat{d}=
d-B_2M_{\beta}\widetilde{\varphi}(0)-C_2 \widetilde{h}_{\beta}(1; \widetilde{\varphi}(1))=\begin{bmatrix}
    -\varphi_2(0)-\int\limits_0^1\varphi_3(s)ds\\1-\varphi_1(0)+\varphi_3(1)\end{bmatrix}.
\end{equation*}

 Since $\text{ran }Q=\mathbb{R}^2$, condition (b) of Theorem \ref{Th1} is satisfied for any vector $\widehat{d}\in \mathbb{R}^2$.  Hence,  problem \eqref{Example_under}, \eqref{Example_under_BC} admits a solution $x$ for any arbitrarily chosen $\widetilde{\varphi}$. It follows from \eqref{SOLUTION} that $x$ is determined by
\begin{equation}\label{Ex_und_x_mu}
    x(t)=e^{N_{\beta}t}L_{\beta}^\top\left(Q^g\widehat{d}+(I_3-Q^gQ)z\right) + \widetilde{h}_{\beta}(t; \widetilde{\varphi}(t)),
   \end{equation}
where $Q^g$ is a generalized inverse of $Q$, and $z$ is an arbitrary vector in $\mathbb{R}^3$.

Since $Q$ is a full row rank matrix ($\text{rank}~Q=2$), its right inverse can serve as a generalized inverse, see \eqref{right_inverse}: 
\begin{equation*}
Q^g=Q^\top\left(QQ^\top\right)^{-1}=\begin{bmatrix}
        0&0\\1&-1\\1&0
    \end{bmatrix}.
\end{equation*}

Let us set, for instance, $\widetilde{\varphi}(t)=(0, 0, 0)^\top$ and $z=(0, 0, 0)^\top$. Then
\begin{equation*}
    \widetilde{h}_{\beta}(t; \widetilde{\varphi}(t))=\int\limits_0^t e^{N_\beta(t-s)}L_\beta^\top\begin{bmatrix}
        12s^2,1,2s
    \end{bmatrix}^\top ds=\begin{bmatrix}
    0,4t^3,t^4+t,0,0,t^2
\end{bmatrix}^\top,~~\widehat{d}=\begin{bmatrix}
         -2\\0
     \end{bmatrix}.
\end{equation*}

Substituting determined $Q^g$, $\widetilde{h}_{\beta}(t; \widetilde{\varphi}(t))$, and $\widehat{d}$ into \eqref{Ex_und_x_mu}, we obtain that, for the chosen $\widetilde{\varphi}(t)$ and $z$, problem \eqref{Example_under}, \eqref{Example_under_BC} has a solution
\begin{equation*}
     x(t)=e^{N_{\beta}t}L_{\beta}^\top Q^g\widehat{d} + \widetilde{h}_{\beta}(t; \widetilde{\varphi}(t))=\begin{bmatrix}
         0, 4t^3, t^4+t-1, 0, 0, t^2
     \end{bmatrix}^\top.
   \end{equation*}
\end{example}


\begin{thebibliography}{99}

\bibitem{Assanova_Trunk_Uteshova_2024}
A. Assanova, C. Trunk, R. Uteshova, On the solvability of boundary value problems for linear differential-algebraic equations with constant coefficients, Contemp. Math. 798 (2024) 13–19. https://doi.org/10.1090/conm/798.

\bibitem{ascher1992projected}
U. Ascher, L. Petzold, Projected collocation for higher-order higher-index differential-algebraic equations, J. Comput. Appl. Math. 43 (1992) 243–259. https://doi.org/10.1016/0377-0427(92)90269-4.

\bibitem{ascher1998computer}
U. Ascher, L. Petzold, Computer Methods for Ordinary Differential Equations and Differential-Algebraic Equations, vol. 61, SIAM, Philadelphia, 1998.

\bibitem{bai1992modified}
Y. Bai, A modified Lobatto collocation for linear boundary value problems of differential-algebraic equations, Comput. 49 (1992) 139–150. https://doi.org/10.1007/BF02238746.

\bibitem{benisrael2006GeneralizedInverses}
A. Ben-Israel, T.N. Greville, Generalized Inverses: Theory and Applications, Springer, Berlin, 2006.

\bibitem{berger_diss}
T. Berger, On differential-algebraic control systems, TU Ilmenau, 2013.

\bibitem{berger2016linear}
T. Berger, C. Trunk, H. Winkler, Linear relations and the Kronecker canonical form, Linear Algebra Appl. 488 (2016) 13–44. https://doi.org/10.1016/j.laa.2015.09.033.

\bibitem{BT12}
T. Berger, S. Trenn, The quasi-Kronecker form for matrix pencils, SIAM J. Matrix Anal. Appl. 33 (2012) 336–368. https://doi.org/10.1137/110826278.

\bibitem{Berger2013addition}
T. Berger, S. Trenn, Addition to “The quasi-Kronecker form for matrix pencils”, SIAM J. Matrix Anal. Appl. 34 (2013) 94–101. https://doi.org/10.1137/120883244.

\bibitem{brenan1995numerical}
K. Brenan, S. Campbell, L. Petzold, Numerical Solution of Initial-Value Problems in Differential-Algebraic Equations, SIAM, Philadelphia, 1995.

\bibitem{clark1989numerical}
K. Clark, L. Petzold, Numerical solution of boundary value problems in differential-algebraic systems, SIAM J. Sci. Stat. Comput. 10 (1989) 915–936. https://doi.org/10.1137/0910053.

\bibitem{Dzhumabaev:1989}
D. Dzhumabaev, Criteria for the unique solvability of a linear boundary-value problem for an ordinary differential equation, USSR Comput. Math. Math. Phys. 29 (1989) 34–46. https://doi.org/10.1016/0041-5553(89)90038-4.

\bibitem{gantmacher1959theory}
F.R. Gantmacher, The Theory of Matrices, vol. I \& II, Chelsea, New York, 1959.

\bibitem{kronecker1890algebraische}
L. Kronecker, Algebraische Reduktion der Schaaren bilinearer Formen, Sitzungsber. Akad. Wiss. Berlin (1890) 763–767.

\bibitem{Kunkel:2006}
P. Kunkel, V. Mehrmann, Differential-Algebraic Equations: Analysis and Numerical Solution, Eur. Math. Soc., Zürich, 2006.

\bibitem{lamour2013differential}
R. Lamour, R. März, C. Tischendorf, Differential-Algebraic Equations: A Projector Based Analysis, Springer, Berlin, 2013.

\bibitem{Lamour_Maerz2015}
R. Lamour, R. März, E. Weinmüller, Boundary-value problems for differential-algebraic equations: A survey, in: A. Ilchmann, T. Reis (Eds.), Surv. Differ. Algebraic Equ. III, Differ.-Algebraic Equ. Forum, Springer, Berlin, 2015, pp. 177–309.

\bibitem{LancasterRodman2005}
P. Lancaster, L. Rodman, Canonical forms for Hermitian matrix pairs under strict equivalence and congruence, SIAM Rev. 47 (2005) 407–443. https://doi.org/10.1137/S003614450444556X.

\bibitem{marz1996canonical}
R. März, Canonical projectors for linear differential algebraic equations, Comput. Math. Appl. 31 (1996) 121–135. https://doi.org/10.1016/0898-1221(95)00224-3.

\bibitem{marz2004solvability}
R. März, Solvability of linear differential algebraic equations with properly stated leading terms, Result. Math. 45 (2004) 88–105. https://doi.org/10.1007/BF03323000.

\bibitem{Puche_Reis_2018_ConstantcoefficientDAE}
M. Puche, T. Reis, F.L. Schwenninger, Constant-coefficient differential-algebraic operators and the Kronecker form, Linear Algebra Appl. 552 (2018) 29–41. https://doi.org/10.1016/j.laa.2018.04.005.

\bibitem{RabierRheinboldt02}
P.J. Rabier, W.C. Rheinboldt, Theoretical and numerical analysis of differential-algebraic equations, in: B.S. Garbow (Ed.), Solution of Equations in $\mathbb{R}^n$ (Part 4), Techniques of Scientific Computing (Part 4), North-Holland, Amsterdam, 2002, pp. 183–540.

\bibitem{riaza2008differential}
R. Riaza, Differential-Algebraic Systems: Analytical Aspects and Circuit Applications, World Sci. Publ., Singapore, 2008.


\bibitem{stover2001collocation}
R. Stöver, Collocation methods for solving linear differential-algebraic boundary value problems, Numer. Math. 88 (2001) 771–795. https://doi.org/10.1007/PL00005458.

\bibitem{Trenn13}
S. Trenn, Solution concepts for linear DAEs: a survey, in: A. Ilchmann, T. Reis (Eds.), Surv. Differ. Algebraic Equ. I, Springer, Berlin, 2013, pp. 137–172.

\end{thebibliography}

\end{document}